\documentclass{amsart}

\usepackage{amsmath,amsthm,amssymb}

\usepackage{graphicx}

\title{Free subgroups in groups acting on rooted trees}
\author{Volodymyr Nekrashevych}\thanks{This material is based upon work supported by the National Science Foundation under Grant DMS-0605019.}

\newcommand{\alb}{\mathsf{X}}
\newcommand{\xs}{\alb^*}
\newcommand{\xo}{\alb^\omega}
\newcommand{\nuke}{\mathcal{N}}
\newcommand{\arr}{\longrightarrow}
\newcommand{\aut}[1]{\mathop{\mathrm{Aut}}\left(#1\right)}

\newtheorem{theorem}{Theorem}[section]
\newtheorem{proposition}[theorem]{Proposition}
\newtheorem{corollary}[theorem]{Corollary}
\newtheorem{lemma}[theorem]{Lemma}

\theoremstyle{definition}
\newtheorem{defi}{Definition}[section]

\newtheorem{examp}{Example}
\newtheorem{examps}[examp]{Examples}

\begin{document}
\begin{abstract}
We show that if a group $G$ acting faithfully on a rooted tree $T$
has a free subgroup, then either there exists a point $w$ of the
boundary $\partial T$ and a free subgroup of $G$ with trivial
stabilizer of $w$, or there exists $w\in\partial T$ and a free
subgroup of $G$ fixing $w$ and acting faithfully on arbitrarily
small neighborhoods of $w$. This can be used to prove absence of
free subgroups for different known classes of groups. For
instance, we prove that iterated monodromy groups of expanding
coverings have no free subgroups and give another proof of a
theorem by S.~Sidki.
\end{abstract}

\maketitle

\section{Introduction}

It is well known that free groups are ubiquitous in the
automorphism group of an infinite rooted spherically homogeneous
tree, see for
instance~\cite{bhattach:freegr,abertvirag:dimension}, though
explicit examples (especially ones generated by finite automata)
were not so easy to construct,
see~\cite{al:free_en,sibr:gln,olij:ccc_en,glasnermozes,vorobets:alfree}.

On the other hand, many famous groups, which are defined by their
action on rooted trees do not have free subgroups. Absence of free
subgroups is proved in different ways. In some cases it follows
from torsion or sub-exponential growth (as in the Grigorchuk
groups~\cite{grigorchuk:80_en,grigorchuk:growth_en} and
Gupta-Sidki groups~\cite{gupta-sidkigroup}). In other cases it is
proved using some contraction arguments (see, for
instance~\cite{zukgrigorchuk:3st}).

S.~Sidki has proved in~\cite{sidki:polynonfree} absence of free
groups generated by ``automata of polynomial growth'', which
covers many examples.

An important class of groups acting on rooted trees are the
\emph{contracting self-similar groups}. They appear naturally as
iterated monodromy groups of expanding dynamical systems
(see~\cite{nek:book}). There are no known examples of contracting
self-similar groups with free subgroups and it was a folklore
conjecture that they do not exist.

The intuition behind this conjecture and the theorems mentioned
above is that there is no sufficient ``room'' for free subgroups.
The action of the elements of the groups in all examples are
concentrated on small portions of the tree and the graphs of the
action of the groups on the boundary of the rooted tree are also
small in some sense. For instance, the graphs of the action of
contracting groups have polynomial growth.

We prove in our paper the following theorem formalizing this
intuition.

\medskip
\noindent\textbf{Theorem~\ref{th:mainlemma}. }{\itshape Let $G$ be
a group acting faithfully on a locally finite rooted tree $T$.
Then one of the following holds.
\begin{enumerate}
\item $G$ has no free non-abelian subgroups,
\item there is a free non-abelian subgroup $F<G$ and a point $w\in\partial T$
such that the stabilizer $F_w$ is trivial,
\item there is a point $w\in\partial T$ such that the group of $G$-germs
$G_{(w)}$ has a free non-abelian subgroup.
\end{enumerate}}
\medskip

Here the group of $G$-germs $G_{(w)}$ is the quotient of the
stabilizer $G_w$ by the subgroup of automorphisms $g$ of the tree
$T$ acting trivially on a neighborhood $U_g\subset\partial T$ of
$w$.

Theorem~\ref{th:mainlemma} was inspired by a result of
M.~Ab{\'e}rt implying that if $F$ is a free group acting
faithfully and level transitively on a rooted tree, then there
exists a point of the boundary of the tree having trivial
stabilizer in $F$, see~\cite{abert:conjugatechains}.

Even though the theorem itself is not very complicated, it gives a
way to find simple proofs of absence of free subgroups in many
groups acting on rooted trees. One has to show that the graphs of
the action of the group on the boundary are so small that a free
action of a free subgroup is not possible, and then to analyze the
action of the elements of the group on neighborhoods of fixed
points in order to show that the groups of germs of the action are
also small and have no free subgroups.

In particular, we confirm the conjecture on contracting groups.

\medskip
\noindent\textbf{Theorem~\ref{th:contracting}. } {\itshape
Contracting groups have no free subgroups.}
\medskip

This theorem implies, for instance, that the iterated monodromy
groups of post-critically finite rational functions and other
expanding dynamical systems (see~\cite{nek:book,bgn}) have no free
subgroups. It is an interesting open question if all contracting
groups are amenable.

We also generalize (in Theorem~\ref{th:bounded}) the fact that
there are no free groups generated by bounded automorphisms of a
rooted tree. For the notion of bounded automorphisms see
Definition~\ref{def:bdd} of our paper and the
articles~\cite{sid:cycl,bondnek,bknv}. It is known that groups
generated by bounded automorphisms \emph{defined by finite
automata} have no free subgroups~\cite{sidki:polynonfree} and that
they are even amenable~\cite{bknv}. Absence of free subgroups for
the general case of bounded automorphisms is proved here for the
first time. It is not known if they are all amenable.

We also give a shorter proof of the theorem of
S.~Sidki~\cite{sidki:polynonfree} about automata of polynomial
growth (only for the case of finite alphabets).

A very intriguing open question now is to see if all the groups
covered by these theorems are amenable and to prove a theorem on
amenability of groups acting on rooted trees similar to
Theorem~\ref{th:mainlemma}. The first more or less general result
in this direction is the proof of amenability of groups generated
by bounded automata in~\cite{bknv}.

\section{Preliminaries on rooted trees}
Here we recall the basic notions related to rooted trees and fix
notation. The reader can find more on this
in~\cite{bass,sidki_monogr,handbook:branch,grineksu_en}.

A \emph{rooted tree} is a tree with a fixed vertex called the
\emph{root} of the tree. We consider only locally finite trees in
our paper, i.e., trees in which every vertex belongs to a finite
number of edges.

We say that a vertex $u$ of a tree $T$ is \emph{below} a vertex
$v$ if the path from the root of $T$ to $u$ goes through $v$. We
denote by $T_v$ the subtree of all vertices which are below $v$
together with $v$ serving as a root of $T_v$. See
Figure~\ref{fig:T}.

\begin{figure}
  \includegraphics{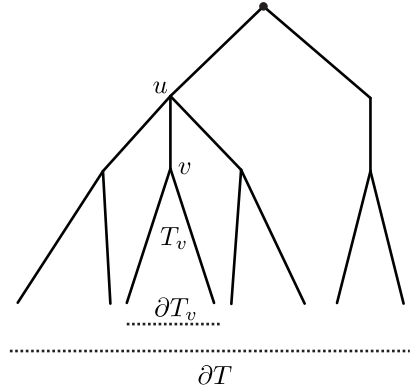}\\
  \caption{Rooted tree.}\label{fig:T}
\end{figure}

The \emph{boundary} $\partial T$ of the tree $T$ is the set of
simple infinite paths starting at the root of $T$. The boundary
$\partial T_v$ of a subtree $T_v$ consists then of the paths going
through the vertex $v$. The collection of subsets $\partial T_v$
of $\partial T$ is a basis of topology on $\partial T$. The
topological space $\partial T$ is compact and totally
disconnected. If $\tilde T$ is a subtree of $T$, then
$\partial\tilde T$ is a closed subset of $\partial T$ in the
natural way.

An automorphism of the rooted tree $T$ is an automorphism of the
tree $T$ fixing the root vertex. Every automorphism of $T$ fixes
also the \emph{levels} of the tree as sets. Here a \emph{level
number $n$} of the rooted tree $T$ is the set $L_n$ of the
vertices on distance $n$ from the root.

A rooted tree $T$ is said to be \emph{spherically homogeneous} if
the automorphism group of $T$ is transitive on the levels.

If $T$ is spherically homogeneous, then $\partial T$ is equipped
with a natural probability measure $m_T$ defined by the condition
that measure of $\partial T_v$ is equal to $1/|L_n|$, where $L_n$
is the level of the vertex $v$. This is the unique probability
measure invariant under the action of the automorphism group of
$T$.

\subsection{Trees of words and almost finitary
automorphisms}\label{ss:words} Let $X$ be a finite set, called
\emph{alphabet} and let $X^*$ be the free monoid generated by $X$,
i.e., the set of finite words $x_1x_2\ldots x_n$ over the alphabet
$X$, including the empty word $\varnothing$. The set $X^*$ has a
natural structure of a rooted tree, where a vertex $v\in X^*$ is
connected to the vertices of the form $vx$ and the empty word is
the root.

We denote by $v X^*$ the sub-tree of words starting by $v$, i.e.,
the sub-tree $X^*_v$ of vertices below the vertex $v$.

The boundary $\partial X^*$ is naturally homeomorphic to the set
of infinite sequences $X^\omega=\{x_1x_2\ldots\;:\;x_i\in X\}$
with the product topology (where $X$ is discrete).

More generally, if
\[\alb=(X_1, X_2, \ldots)\]
is a sequence of finite sets, then we denote
\[\xs=\bigcup_{n\ge 0}\alb^n,\] where $\alb^n=X_1\times
X_2\times\cdots\times X_n$ for $n\ge 1$ and
$\alb^0=\{\varnothing\}$. We denote by $|v|$ the length of a word
$v\in\xs$, i.e., number such that $v\in\alb^{|v|}$.

The set $\xs$ is a rooted tree with the root $\varnothing$ in
which an element $v\in\alb^n$ is connected to all elements of the
form $vx$ for $x\in X_{n+1}$.

The boundary of the tree $\xs$ is homeomorphic to the direct product
\[\xo=X_1\times X_2\times\ldots\]
of discrete sets.

The invariant measure on $\xo$ coincides in this case with the
\emph{uniform Bernoulli} measure defined as the direct product of
the uniform distributions on $X_n$.

We denote by $\alb_n$ the sequence
\[\alb_n=(X_{n+1}, X_{n+2}, \ldots)\]

If $g$ is an automorphsm of the tree $\xs$, then for every
$v\in\xs$ there exists a unique automorphism $g|_v$ of the tree
$\xs_{|v|}$ such that
\[g(vu)=g(v)g|_v(u)\]
for all $u\in\xs_{|v|}$.

It is easy to see that the following properties of this operation
hold
\begin{equation}
\label{eq:restriction} (g_1g_2)|_v=g_1|_{g_2(v)}g_2|_v,\qquad
\left(g|_v\right)^{-1}=g^{-1}|_{g(v)},\qquad
g|_{v_1v_2}=g|_{v_1}|_{v_2}.
\end{equation}

\subsection{Almost finitary automorphisms of $\xs$}
\begin{defi}
Let $g$ be an automorphism of the tree $\xs$. A sequence $w\in\xo$
is \emph{$g$-regular} if there exists a beginning $v\in\xs$ of $w$
such that $g|_v$ is trivial. We say that $w\in\xo$ is
\emph{$g$-singular} if it is not $g$-regular.
\end{defi}

The set of $g$-singular points of $\xo$ can be measured using the
\emph{growth} of the automorphism $g$.

\begin{defi}
\label{def:growthfunction} Let $g$ be an automorphism of the
spherically homogeneous rooted tree $\xs$. Then its \emph{growth
function} is
\[\theta_g(n)=|\{v\in\alb^n\;:\;g|_v\ne id\}|.\]

If $T\subset\xs$ is a \emph{rooted} subtree of $\xs$, i.e., a
sub-tree containing the root of $\xs$, then the \emph{relative
growth function} is
\[\theta_{g, T}(n)=|\{v\in L_n\;:\;g|_v\ne id\}|\]
and $g$ is \emph{almost finitary on $T$} if
\[\lim_{n\to\infty}\frac{\theta_{g, T}(n)}{|L_n|}=0,\]
where $L_n=T\cap\alb^n$ is the $n$th level of the tree $T$.
\end{defi}

\begin{proposition}
\label{pr:measureregular} Let $T$ be a spherically homogeneous
rooted subtree of $\xs$ and let $g$ be an automorphism of $\xs$.
Then
\[\lim_{n\to\infty}\frac{\theta_{g, T}(n)}{|L_n|}=m_T(\Sigma_g),\]
where $L_n$ is the $n$th level of the tree $T$ and $\Sigma_g$ is
the set of $g$-singular points $w\in\partial T\subset\xo$.
\end{proposition}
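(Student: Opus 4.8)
The plan is to express the ratio $\theta_{g,T}(n)/|L_n|$ as the measure of an explicit clopen subset of $\partial T$, to identify $\Sigma_g$ with the intersection of a decreasing family of such sets, and then to invoke continuity of the measure from above.

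First I would introduce, for each $n\ge 0$, the clopen set
\[
S_n=\bigsqcup_{\substack{v\in L_n\\ g|_v\ne id}}\partial T_v\ \subseteq\ \partial T .
\]
The cylinders $\partial T_v$ with $v$ ranging over $L_n$ are pairwise disjoint and cover $\partial T$, and since $T$ is spherically homogeneous each of them has measure $1/|L_n|$. Hence $m_T(S_n)=\theta_{g,T}(n)/|L_n|$, and in particular $S_n$ is measurable.

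Next I would check that $(S_n)$ is decreasing and that $\bigcap_{n\ge 0}S_n=\Sigma_g$. For the first point, let $w\in S_{n+1}$, let $v$ be the length-$(n+1)$ prefix of $w$ and $v'$ its length-$n$ prefix, so $v=v'x$ for a letter $x$. The restriction identity $g|_{v'x}=g|_{v'}|_{x}$ from~\eqref{eq:restriction} shows that $g|_{v'}=id$ would force $g|_v=id$; as $g|_v\ne id$ we conclude $g|_{v'}\ne id$, so $w\in\partial T_{v'}\subseteq S_n$. Iterating this, a point $w\in\partial T$ lies in every $S_n$ exactly when every finite prefix $v$ of $w$ has $g|_v\ne id$, i.e., exactly when $w$ is $g$-singular; thus $\bigcap_n S_n=\Sigma_g$, which in particular shows that $\Sigma_g$ is closed, hence measurable. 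Note that this all makes sense even though $g$ need not map $T$ to itself: only the question of whether $g|_v$ is trivial enters, and $\partial T_v$ is a subset of $\partial T$ depending only on $T$.

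Finally, $m_T$ is a finite measure and $S_0\supseteq S_1\supseteq\cdots$, so continuity from above gives
\[
m_T(\Sigma_g)=m_T\Bigl(\,\bigcap_{n\ge 0}S_n\Bigr)=\lim_{n\to\infty}m_T(S_n)=\lim_{n\to\infty}\frac{\theta_{g,T}(n)}{|L_n|},
\]
as required. There is no real obstacle here: the whole content is the identification $m_T(S_n)=\theta_{g,T}(n)/|L_n|$ together with the nesting $S_{n+1}\subseteq S_n$, and the latter is precisely where the restriction formula~\eqref{eq:restriction} is used. (If $T$ happens to be a finite tree the statement is vacuous, since then $\partial T=\varnothing$.)
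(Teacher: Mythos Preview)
Your proof is correct and follows essentially the same route as the paper: your sets $S_n$ are precisely the paper's $\Sigma_{g,n}=\bigcup_{v\in L_n,\ g|_v\ne id}\partial T_v$, and both arguments conclude by identifying $m_T(S_n)=\theta_{g,T}(n)/|L_n|$, showing the $S_n$ decrease to $\Sigma_g$, and applying continuity of the finite measure $m_T$ from above. If anything, you spell out the nesting $S_{n+1}\subseteq S_n$ via~\eqref{eq:restriction} more explicitly than the paper does.
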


\begin{proof}
It is easy to see that the set of $g$-regular points $w\in\xo$ is
open, hence the set $\Sigma_g$ is closed in $\partial T$. It
follows from by definitions of $\theta_{g, T}$ and the measure
$m_T$ that the number $\frac{\theta_{g, T}(n)}{|L_n|}$ is equal to
$m_T(\Sigma_{g, n})$, where
\[\Sigma_{g, n}=\bigcup_{v\in L_n, g|_v\ne id}
v\xo\cap\partial T.\] The sequence of the sets $\Sigma_{g, n}$ for
$n=1,2,\ldots$ is decreasing and their intersection is $\Sigma_g$,
since $\Sigma_g$ is closed. This finishes the proof, by continuity
of the measure.
\end{proof}

Recall that if we have a group $G$ generated by a finite symmetric
set $S$ and acting on a set $M$, then the corresponding Schreier
graph is the graph with the set of vertices $M$ and the set of
edges $S\times M$, where an edge $(s, x)\in S\times M$ starts in
$x$ and ends in $s(x)$.

A locally finite graph is said to be \emph{amenable} if for every
$\epsilon$ there exists a finite set $F_\epsilon$ such that
\[\frac{|\partial F_\epsilon|}{|F_\epsilon|}<\epsilon,\]
where $\partial F_\epsilon$ is the set of edges beginning in
$F_\epsilon$ and ending outside of $F_\epsilon$. A regular
tree, in particular the Cayley graph of a free non-abelian group,
is an example of a \emph{non}-amenable graph.

\begin{proposition}
\label{pr:griamenable} Let $G$ be a finitely generated
automorphism group of $\xs$ and let $T\subset\xs$ be a
$G$-invariant rooted subtree on which $G$ acts level transitively.
If all elements of $G$ are almost finitary on $T$, then all
components of the Schreier graph of the action of $G$ on $\partial
T$ are amenable.
\end{proposition}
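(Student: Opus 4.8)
The plan is to build an explicit Følner sequence in each component of the Schreier graph out of the levels of the tree. Fix a finite symmetric generating set $S$ of $G$; by hypothesis every element of $G$, in particular every $s\in S$, is almost finitary on $T$. Fix a component of the Schreier graph, that is, an orbit $Gw$ with $w\in\partial T$. For $n\ge 1$ write $L_n=T\cap\alb^n$, call a vertex $v\in L_n$ \emph{active} if $s|_v\ne id$ for some $s\in S$ and \emph{quiet} otherwise, and let $A_n$ and $Q_n$ be the sets of active and quiet vertices. Since $A_n\subseteq\bigcup_{s\in S}\{v\in L_n:s|_v\ne id\}$, we have $|A_n|\le\sum_{s\in S}\theta_{s,T}(n)$, so the hypothesis gives $|A_n|/|L_n|\to 0$. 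Let $\Gamma_n$ be the finite Schreier graph of the action of $G$ on $L_n$; it is connected because the action is level transitive, and its vertices have degree at most $|S|$. Let $K_1,\dots,K_r$ be the connected components of the subgraph of $\Gamma_n$ induced by $Q_n$, obtained by deleting all active vertices.

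The main step is to lift each $K_j$ to a finite subset of the orbit $Gw$, attaching \emph{one tail per component}. If $v\in Q_n$ and $s\in S$ then $s|_v=id$, so $s$ carries the cylinder $\partial T_v$ onto $\partial T_{s(v)}$ by $v\tau\mapsto s(v)\tau$, merely relabelling the length-$n$ prefix. Hence if $q,q'$ lie in one component $K_j$ and $h\in G$ is a product of generators along a path from $q$ to $q'$ inside $K_j$, then the cocycle identities~\eqref{eq:restriction} together with the quietness of every intermediate vertex force $h|_q=id$, so $h(q\tau)=q'\tau$ for all $\tau$. Writing $v_n\in L_n$ for the length-$n$ beginning of $w$ and $w=v_n u$, pick for each $j$ (by level transitivity) an element $g_j\in G$ with $g_j(v_n)=p_j$ for a chosen $p_j\in K_j$, and put $t_j=g_j|_{v_n}(u)$, so that $g_j(w)=p_j t_j\in Gw$; by the previous observation $\widetilde K_j:=\{q t_j:q\in K_j\}\subseteq Gw$. (If $v_n\in Q_n$ one may take $g_j=1$ for the component containing $v_n$; no hypothesis on whether $v_n$ is active is needed, since $w$ itself need not lie in the Følner set.) Set $F_n=\bigsqcup_{j=1}^r\widetilde K_j$. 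Distinct $K_j$ use distinct length-$n$ prefixes, so the pieces are disjoint and $|F_n|=\sum_j|K_j|=|Q_n|=|L_n|-|A_n|$.

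It then remains to estimate the edge boundary of $F_n$. An edge $(s,q t_j)$ with $q\in K_j$ ends, by quietness of $q$, at $s(q)\,t_j$; if $s(q)\in Q_n$ it lies in the same component $K_j$ (the edge $q\to s(q)$ survives in the induced subgraph), so the edge stays in $F_n$, whereas if $s(q)\in A_n$ the edge leaves $F_n$. Thus the edges of $\partial F_n$ correspond bijectively to the pairs $(s,q)\in S\times Q_n$ with $s(q)\in A_n$, of which there are at most $|S|\cdot|A_n|$ (one vertex $s^{-1}(a)$ for each $s\in S$ and $a\in A_n$). Hence
\[
\frac{|\partial F_n|}{|F_n|}\le\frac{|S|\cdot|A_n|}{|L_n|-|A_n|}\,,
\]
which tends to $0$ as $n\to\infty$, so $Gw$, and therefore every component of the Schreier graph, is amenable. (If $|L_n|$ does not tend to infinity, e.g.\ if $T$ is finite, then $\partial T$ is finite and all components are finite, so there is nothing to prove.) The step I expect to be the real obstacle is the lifting: one cannot in general send a whole level-$n$ cylinder to a single point of the orbit, so it is essential to attach a separate compatible tail $t_j$ to each connected component of $\Gamma_n$-minus-active-vertices; the sharp bound $|\partial F_n|\le|S|\cdot|A_n|$ then hinges on the fact that distinct components are non-adjacent in $\Gamma_n$, which is exactly why the active vertices, rather than the fixed points, of a generator are the right set to delete.
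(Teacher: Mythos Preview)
Your proof is correct and follows essentially the same strategy as the paper's: build F{\o}lner sets by stripping the ``bad'' part of the level-$n$ Schreier graph $\Gamma_n$, take connected components, and lift them to the orbit $Gw$ using triviality of sections along good paths. The only differences are cosmetic---the paper deletes bad \emph{edges} $(s,v)$ with $s|_v\ne id$ rather than bad vertices, and then uses an averaging argument to select a single component $\Phi_i$ with small boundary ratio (lifted with one tail), whereas you lift \emph{all} components with their own tails $t_j$; both yield the same conclusion.
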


\begin{proof}
The proposition follows directly from the theorem
of~\cite{nek:griamenable} (which in turn is a corollary of a
result of V.~Kaimanovich~\cite{kaim:leaves}). But we prefer to
give here a direct proof constructing the sets $F_\epsilon$
(called \emph{F\o lner sets}).

Fix a finite symmetric generating set $S$ of $G$ and consider the
Schreier graph $\Gamma_n$ of the action of $G$ on the $n$th level
$L_n$ of the tree $T$.

Let $\Gamma_n'$ be the subgraph of $\Gamma_n$ which consists only
of the edges $(s, v)$ such that $s|_v$ is trivial. Note that if
$(s, v)$ belongs to $\Gamma_n'$, then the inverse edge $(s^{-1},
s(v))$ also belongs to $\Gamma_n'$. The number of edges in the
difference $\Gamma_n\setminus\Gamma_n'$ is equal to $\sum_{s\in
S}\theta_{s, T}(n)$.

Let $\Phi_1, \Phi_2, \ldots, \Phi_k$ be the connected components
of $\Gamma_n'$ (as sets of vertices). Then in $\Gamma_n$ we have
\[\frac{|\partial\Phi_1|+|\partial\Phi_2|+\cdots+|\partial\Phi_k|}{|\Phi_1|+|\Phi_2|+\cdots
+|\Phi_k|}\le\frac{\sum_{s\in S}\theta_{s, T}(n)}{|L_n|},\] hence
there exists a component $\Phi_i$ such that
\[\frac{|\partial\Phi_i|}{|\Phi_i|}\le\frac{\sum_{s\in S}\theta_{s, T}(n)}{|L_n|}.\]

Consider an orbit $O$ of the action of $G$ on $\partial T$. Since
the action of $G$ on $T$ is level-transtive, there exists $w\in O$
such that the beginning $v_1$ of length $n$ of $w$ belongs to
$\Phi_i$. Let $u\in\xo_n$ be such that $w=v_1u$. By the definition
of $\Phi_i$, for every $v\in\Phi_i$ there exists an element $g\in
G$ such that $g(v_1)=v$ and $g|_{v_1}=id$ (see the first two
properties of~\eqref{eq:restriction} in
Subsection~\ref{ss:words}). Then $g(v_1u)=vu$, i.e., the point
$vu$ belongs to the orbit $O$. Let
\[F_n=\{vu\;:\;v\in\Phi_i\}\subset O.\]
If the edge $(s, v)$ from $v\in\Phi_i$ to $s(v)$ belongs to
$\Gamma_n'$, then $s|_v=id$, hence $s(vu)$ belongs to $\Phi_i$.
Consequently, in the Schreier graph of the action of $G$ on $O$ we
have
\[\frac{|\partial F_n|}{|F_n|}\le\frac{|\partial\Phi_i|}{|\Phi_i|}\le
\frac{\sum_{s\in S}\theta_{s, T}(n)}{|L_n|}\to 0,\] as
$n\to\infty$. This means that $F_n$ are F\o lner sets, i.e., that
the Schreier graph of $O$ is amenable.
\end{proof}

\section{Main theorem}
We will use the following simple lemmata. Their proofs are known
(see, for instance Proposition 1 of~\cite{sidki:polynonfree} for
Lemma~\ref{lemma2}), but we provide them for completeness.

\begin{lemma}
\label{lemma1} Let $F$ be a free non-abelian group and let $H<F$
be a cyclic subgroup. Then there exists a free non-abelian
subgroup $\tilde F<F$ such that $\tilde F\cap H$ is trivial.
\end{lemma}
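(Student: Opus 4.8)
The plan is to produce $\tilde F$ by a ping-pong argument using the action of $F$ on its boundary, while keeping the ``ping-pong regions'' away from the fixed points of $h$, so that no power of $h$ can lie in the subgroup we build. We may assume $h\neq 1$, since otherwise $\tilde F=F$ works. Fix a free basis of $F$ and let $\partial F$ be the space of ends of the corresponding Cayley tree; $F$ acts on the compact space $\partial F$, and every nontrivial element $g$ acts with North--South dynamics, i.e. with a unique attracting fixed point $g^{+}$ and a unique repelling fixed point $g^{-}$. Write $p^{\pm}=h^{\pm}$.

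The first step is to find $g_1,g_2\in F$ such that the three two-point sets $\{p^+,p^-\}$, $g_1\{p^+,p^-\}$, $g_2\{p^+,p^-\}$ are pairwise disjoint. For this one notes that the stabilizers $\mathrm{Stab}_F(p^+)$ and $\mathrm{Stab}_F(p^-)$ have infinite index in $F$ (a free group of rank $\ge 2$ has no finite orbit on its boundary: otherwise a finite-index subgroup would fix two ends, hence preserve the geodesic between them and embed into $\mathbb{Z}$, contradicting Nielsen--Schreier). By B.~H.~Neumann's lemma, $F$ is therefore not covered by the finitely many cosets of these two stabilizers that describe the ``forbidden'' positions of $g_1$ and of $g_2$, so suitable $g_1,g_2$ exist.

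Next put $h_1=g_1hg_1^{-1}$ and $h_2=g_2hg_2^{-1}$, so that $\mathrm{Fix}(h_i)=g_i\{p^+,p^-\}$ and these two pairs, together with $\{p^+,p^-\}$, are pairwise disjoint. Choose four pairwise disjoint open subsets $A_1\ni h_1^+$, $A_1'\ni h_1^-$, $A_2\ni h_2^+$, $A_2'\ni h_2^-$ of $\partial F$ whose union is disjoint from a fixed open neighbourhood $V$ of $\{p^+,p^-\}$. By North--South dynamics there is $N$ so large that, for each sign, $h_i^{\pm N}$ carries the complement of the repelling neighbourhood into the attracting one; the ping-pong lemma then shows that $\tilde F:=\langle h_1^N,h_2^N\rangle$ is free of rank $2$, hence non-abelian. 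Moreover the same estimates show that for any $x_0$ outside $\overline{A_1}\cup\overline{A_1'}\cup\overline{A_2}\cup\overline{A_2'}$ and any $w\in\tilde F\setminus\{1\}$ one has $w(x_0)\in\overline{A_1}\cup\overline{A_1'}\cup\overline{A_2}\cup\overline{A_2'}$; hence the attracting fixed point $\lim_n w^n(x_0)$ of every nontrivial $w\in\tilde F$ lies in this set, which is disjoint from $V$ and in particular does not equal $p^+=h^{+}$.

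Finally, every nonzero power $h^k$ fixes $h^{+}$, whereas no nontrivial element of $\tilde F$ does; therefore $\tilde F\cap\langle h\rangle=\{1\}$, and since $\tilde F$ is free of rank $2$ it is the required subgroup. The main point to get right is this last one — arranging that the free subgroup produced by ping-pong avoids the \emph{whole} cyclic group $\langle h\rangle$, not merely its generators — and that is exactly what is bought by insisting that the ping-pong neighbourhoods stay away from $\{h^+,h^-\}$; the only other step needing care is the construction of the conjugators $g_1,g_2$, where Neumann's lemma together with the infinite index of boundary-point stabilizers is used.
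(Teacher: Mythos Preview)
Your argument is correct, but it takes a much heavier route than the paper's. The paper dispatches the lemma in two lines using only elementary combinatorics of free groups: pick any $g\in F$ not commuting with $h$, so that $\langle g,h\rangle$ is free of rank~$2$; its index-$2$ subgroup (the kernel of the map to $\mathbb{Z}/2\mathbb{Z}$ sending $g\mapsto 1$, $h\mapsto 0$) is free on $h,\ g^2,\ g^{-1}hg$ by Nielsen--Schreier, and one sets $\tilde F=\langle g^2,\ g^{-1}hg\rangle$. Since $h$ is part of a free basis together with the generators of $\tilde F$, the intersection $\tilde F\cap\langle h\rangle$ is trivial automatically.

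Your approach via boundary dynamics, Neumann's lemma and ping-pong is a perfectly valid and robust technique (and would generalise, say, to arbitrary non-elementary hyperbolic groups), but here it is considerable overkill: no topology, no North--South dynamics, and no covering-by-cosets argument is needed. One small quibble: your parenthetical justification that boundary-point stabilisers have infinite index (``otherwise a finite-index subgroup would fix two ends'') literally handles only orbits of size at least two; if the orbit were a singleton the whole of $F$ would fix a single end, and you still need a word to exclude that (e.g.\ two non-commuting elements would have axes sharing a half-line, forcing their commutator to have a fixed point in the tree). The conclusion is of course standard and correct.
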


\begin{proof}
We can find a subgroup of $F$ freely generated by the generator
$h$ of $H$ and two other elements $g_1, g_2$ of $F$ (take, for
instance, any element $g$ which does not commute with $h$ and then
take the index 2 subgroup of the free group $\langle g, h\rangle$
generated by $h, g^2$ and $g^{-1}hg$). Then we can take $\tilde
F=\langle g_1, g_2\rangle$.
\end{proof}

\begin{lemma}
\label{lemma2} Let $F$ be a free non-abelian group and let
\[\phi:F\arr G=G_1\times G_2\times\cdots\times G_n\] be a
homomorphism into a finite direct product of groups. If every
composition $\phi_i$ of $\phi$ with the projection $G\arr G_i$ has
a non-trivial kernel, then $\phi$ has a non-trivial kernel.
\end{lemma}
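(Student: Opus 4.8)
The plan is to argue by contradiction: suppose $\phi$ is injective, so that $F$ embeds in $G_1 \times \cdots \times G_n$, while each $\ker \phi_i$ is a nontrivial normal subgroup of $F$. The key point is that $\bigcap_{i=1}^n \ker \phi_i = \ker \phi$ is trivial, since an element of $F$ is killed by $\phi$ exactly when it is killed by every coordinate projection $\phi_i$. So it suffices to show that in a free non-abelian group, finitely many nontrivial normal subgroups cannot have trivial intersection; equivalently, any two nontrivial normal subgroups of $F$ intersect nontrivially, and then one induces on $n$.

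First I would establish the case $n = 2$: if $N_1, N_2 \trianglelefteq F$ are both nontrivial, then $N_1 \cap N_2 \ne \{1\}$. Pick $1 \ne a \in N_1$ and $1 \ne b \in N_2$. Then the commutator $[a,b] = a^{-1}b^{-1}ab$ lies in $N_1$ (as $b^{-1}ab \in N_1$ by normality, times $a^{-1} \in N_1$) and likewise in $N_2$ (as $a^{-1}b^{-1}a \in N_2$, times $b \in N_2$), so $[a,b] \in N_1 \cap N_2$. The remaining task is to choose $a, b$ so that $[a,b] \ne 1$, i.e. so that $a$ and $b$ do not commute. This is where the non-abelian hypothesis enters: in a free group, the centralizer of any nontrivial element is cyclic, so if every nontrivial element of $N_2$ commuted with our chosen $a$, then $N_2$ would lie in the cyclic centralizer $C_F(a)$; but a nontrivial normal subgroup contained in a cyclic subgroup forces $F$ to be abelian (a normal cyclic subgroup of a group with trivial center acting by conjugation must be central or the quotient action is by $\pm 1$ — more simply, if $\langle c \rangle \trianglelefteq F$ then for any $g$, $gcg^{-1} = c^{\pm 1}$, so the index of $C_F(c)$ is at most $2$, making $F$ virtually cyclic, hence cyclic or dihedral-like, contradicting freeness and non-abelianness). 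So some $b \in N_2$ fails to commute with $a$, and $[a,b]$ is the desired nontrivial element of $N_1 \cap N_2$.

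For the inductive step, set $N_i = \ker \phi_i$ and suppose inductively that $M = N_1 \cap \cdots \cap N_{n-1}$ is nontrivial; note $M$ is normal in $F$. Then $M \cap N_n$ is an intersection of two nontrivial normal subgroups, hence nontrivial by the $n = 2$ case, and $M \cap N_n = \ker \phi$. This contradicts injectivity of $\phi$, completing the argument.

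The main obstacle is the verification that a nontrivial normal subgroup of a free non-abelian group cannot be cyclic (equivalently, cannot be contained in a cyclic subgroup); everything else is a short manipulation with commutators and an easy induction. I would handle this by the centralizer fact: in a free group $F$, $C_F(c)$ is cyclic for $c \ne 1$, and if $\langle c \rangle \trianglelefteq F$ then conjugation gives a homomorphism $F \to \aut{\langle c \rangle}$, whose image is a subgroup of $\{\pm 1\}$, so $C_F(c)$ has index $\le 2$ in $F$; a free group with a cyclic subgroup of finite index is itself cyclic (infinite cyclic), contradicting the non-abelian assumption. Hence no nontrivial normal cyclic subgroup exists, which is exactly what the commutator argument needs.
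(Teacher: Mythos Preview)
Your proof is correct and follows the same overall scaffold as the paper: reduce to $n=2$ by induction, observe that $\ker\phi = \ker\phi_1 \cap \ker\phi_2$, and produce a nontrivial element there via a commutator $[a,b]$ with $a\in\ker\phi_1$, $b\in\ker\phi_2$.

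The one genuine difference is how the ``what if $a$ and $b$ commute'' issue is handled. You fix $a$ first and argue that \emph{some} $b\in N_2$ must fail to commute with $a$, since otherwise $N_2$ sits inside the cyclic centralizer $C_F(a)$, and you then prove the auxiliary fact that a free non-abelian group has no nontrivial cyclic normal subgroup. The paper instead picks arbitrary $r_1\in\ker\phi_1$, $r_2\in\ker\phi_2$ and splits into two cases: if $r_1,r_2$ lie in a common cyclic subgroup, a common nontrivial power $r_1^{m_1}=r_2^{m_2}$ already lies in $\ker\phi_1\cap\ker\phi_2$; otherwise they do not commute and $[r_1,r_2]\ne 1$ works. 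This avoids the detour through ``no nontrivial cyclic normal subgroup'' and the virtually-cyclic argument, making the paper's version shorter; your version is a bit more structural in that it isolates a reusable fact about normal subgroups of free groups.
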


\begin{proof}
It is sufficient to prove the lemma for $n=2$. The general
statement will follow then by induction. Let $r_1$ and $r_2$ be
non-trivial elements of the kernels of $\phi_1$ and $\phi_2$,
respectively. If $r_1$ and $r_2$ belong to one cyclic subgroup of
$F$, then there exist $m_1, m_2\in\mathbb{Z}$ such that
$r_1^{m_1}=r_2^{m_2}$ is a non-trivial element of the kernel of
$\phi$. If they do not belong to a common cyclic subgroup, then
they do not commute and $[r_1, r_2]$ is a non-trivial element of
the kernel of $\phi$.
\end{proof}

\begin{defi}
Let $G$ be a group acting on a topological space $\mathcal{X}$.
The \emph{group of $G$-germs} of a point $x\in\mathcal{X}$ is the
quotient of the stabilizer of $x$ in $G$ by the subgroup of
elements acting trivially on a neighborhood of $x$. We denote it
by $G_{(x)}$.
\end{defi}

Informally speaking, $G_{(x)}$ describes the action of the
stabilizer $G_x$ locally on neighborhoods of $x$.

See an application of groups of germs to growth of groups
in~\cite{ershler:growth}.

\begin{theorem}
\label{th:mainlemma} Let $G$ be a group acting faithfully on a locally finite
rooted tree $T$. Then one of the following holds.
\begin{enumerate}
\item $G$ has no free non-abelian subgroups,
\item there is a free non-abelian subgroup $F<G$ and a point $w\in\partial T$
such that the stabilizer $F_w$ is trivial,
\item there is a point $w\in\partial T$ such that the group of $G$-germs
$G_{(w)}$ has a free non-abelian subgroup.
\end{enumerate}
\end{theorem}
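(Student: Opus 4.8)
The plan is as follows. If $G$ has no free non-abelian subgroup we are in case~(1), so assume a free non-abelian $F<G$ is given. I will locate a point $w\in\partial T$ at which $F$ has no ``local kernel'', and then read off alternative~(2) or~(3) from the isomorphism type of the stabilizer $F_w$; being a subgroup of a free group, $F_w$ is free, hence trivial, infinite cyclic, or free non-abelian.

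The crucial step is to find $w\in\partial T$ such that no nontrivial element of $F$ fixes a neighborhood of $w$ pointwise; equivalently, for no vertex $v$ on the ray $w$ is the pointwise stabilizer $F[v]=\{g\in F:\ g(v)=v,\ g|_v=id\}$ of the subtree $T_v$ nontrivial. Suppose no such $w$ exists. Then every point of $\partial T$ lies in some $\partial T_v$ with $F[v]\ne 1$; since $\partial T$ is compact and the sets $\partial T_v$ are clopen, finitely many such sets cover $\partial T$, say $\partial T_{v_1},\dots,\partial T_{v_k}$ with each $F[v_i]\ne 1$. Let $N=\max_i|v_i|$ and let $F^{(N)}\le F$ be the pointwise stabilizer of the level $L_N$; it has finite index in $F$, hence is again free non-abelian, and since an element fixing $L_N$ pointwise fixes all vertices of levels $\le N$ and is therefore determined by its restrictions $g|_u$, $u\in L_N$, faithfulness of the action gives an embedding $F^{(N)}\hookrightarrow\prod_{u\in L_N}\aut{T_u}$. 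For each $u\in L_N$ the projection $F^{(N)}\to\aut{T_u}$ has nontrivial kernel: if $T_u$ is finite this kernel is infinite, since $\aut{T_u}$ is finite while $F^{(N)}$ is not; if $T_u$ is infinite then, choosing a ray through $u$ (which lies in some $\partial T_{v_i}$), $u$ lies below some $v_i$, so $F[v_i]\le F[u]$, hence $F[u]$ is a nontrivial --- therefore infinite --- subgroup of $F$, and $F[u]\cap F^{(N)}$ is a nontrivial subgroup contained in that kernel. Thus every coordinate projection of $F^{(N)}\hookrightarrow\prod_{u\in L_N}\aut{T_u}$ is non-injective, and Lemma~\ref{lemma2} forces the embedding itself to be non-injective --- a contradiction.

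Now fix a point $w$ as above. By construction the kernel of the map $F_w\to F_{(w)}$ is trivial, so $F_{(w)}\cong F_w$, and $F_w$ also embeds into $G_{(w)}$ (an element of $F_w$ with trivial $G$-germ at $w$ already has trivial $F$-germ, hence is trivial). If $F_w$ is free non-abelian, then $G_{(w)}$ contains a free non-abelian subgroup and~(3) holds. If $F_w$ is trivial, then~(2) holds with this $F$ and $w$. If $F_w$ is infinite cyclic, apply Lemma~\ref{lemma1} to obtain a free non-abelian $\tilde F<F$ with $\tilde F\cap F_w=1$; then the stabilizer of $w$ in $\tilde F$ equals $\tilde F\cap F_w=1$, so~(2) holds for $\tilde F$ and $w$.

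I expect the main difficulty to be precisely the crucial step: setting up the compactness-and-pigeonhole reduction so that the hypothesis ``$F$ has a local kernel at every boundary point'' turns into the statement ``every coordinate projection of a level stabilizer into $\prod_{u\in L_N}\aut{T_u}$ is non-injective'', at which point Lemma~\ref{lemma2} applies. Everything else is routine bookkeeping with the restriction maps $g\mapsto g|_v$ and the Nielsen--Schreier theorem.
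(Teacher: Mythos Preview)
Your proof is correct and follows essentially the same strategy as the paper: a compactness argument on $\partial T$ produces a finite cover by subtrees each carrying a nontrivial pointwise stabilizer, and then Lemma~\ref{lemma2} yields a contradiction with faithfulness; Lemma~\ref{lemma1} handles the cyclic-stabilizer case. The only differences are organizational: you first isolate a point $w$ with no local $F$-kernel and then case-split on $F_w$, whereas the paper runs a single contradiction assuming all three alternatives fail; and you pass to a full level $L_N$ rather than working directly with the finite covering set $V$, which forces you to treat separately the vertices $u\in L_N$ with $T_u$ finite (harmless, since $\aut{T_u}$ is then finite). Both variants are equivalent in content.
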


\begin{proof}
Suppose that on the contrary, $G$ has a free subgroup $F$, the
groups of $G$-germs $G_{(w)}$ for all $w\in\partial T$ have no
free subgroups and there is no free subgroup $\widetilde F$ and a
point $w\in\partial T$ such that the stabilizer $\widetilde{F}_w$
is trivial.

For every point $w\in\partial T$ the stabilizer $F_w$ contains a
free non-abelian subgroup, since otherwise $F_w$ is cyclic or
trivial, and then we can find by Lemma~\ref{lemma1} a free
non-abelian subgroup $\widetilde F<F$ such that
$\widetilde{F}_w=\widetilde{F}\cap F_w$ is trivial. But the group
$G_{(w)}$ has no free subgroups, consequently the natural
homomorphism $F_w\longrightarrow G_{(w)}$ has a non-trivial
kernel. This means that there exists a vertex $v_w$ on the path
$w$ and a non-trivial element of $F$ acting trivially on the
sub-tree $T_{v_w}$.

We get a covering of $\partial T$ by open subsets $\partial
T_{v_w}$. The boundary of the tree is compact, hence there exists
a finite sub-covering. This means that there exists a finite set
of vertices $V$ such that $\partial T=\bigcup_{v\in V}\partial
T_v$ and for every $v\in V$ the group of the elements of $F$
acting trivially on $T_v$ is non-trivial, hence infinite.

Denote by $F_V$ the intersection of the stabilizers of the vertices of $V$ in $F$. It
is a subgroup of finite index in $F$. Consider the homomorphism
\[\phi:F_V\arr\prod_{v\in V}\aut{T_v}\]
mapping an automorphism $g\in F_V$ to the automorphisms of the
trees $T_v$ that it induces. Composition of $\phi$ with the
projection onto $\aut{T_v}$ has non-trivial kernel for every $v\in
V$, since $F_V$ has finite index in $F$ and the group of elements
of $F$ acting trivially on $T_v$ is infinite for each $v\in V$.
Consequently, by Lemma~\ref{lemma2}, the kernel of the
homomorphism $\phi$ is non-trivial. But this is not possible,
since we assume that $F$ acts faithfully on $\partial T$ and
$\partial T=\bigcup_{v\in V}\partial T_v$.
\end{proof}

\begin{examp}
Let $F$ be a free group. Then there exists a descending series of
finite index subgroups $F=G_0>G_1>G_2>\ldots$ with trivial
intersection $\bigcap_{n\ge 0}G_n$. Every such a series defines an
action of $F$ on the \emph{coset tree}. It is the rooted tree with
the set of vertices equal to the union $\bigcup_{n\ge 0}F/G_n$ of
the sets of cosets. The set $F/G_n$ is the $n$th level of the tree
and a vertex $gG_n\in F/G_n$ is connected by an edge with a vertex
$hG_{n+1}\in F/G_{n+1}$ if and only if $hG_{n+1}\subset gG_n$. The
group $F$ acts then on the coset tree by the natural action on the
cosets:
\[g\cdot(hG_n)=(gh)G_n.\]
The action is level transitive and the stabilizer of the path $G_0, G_1,
G_2, \ldots$ is equal to the intersection of the subgroups $G_n$,
i.e., is trivial.

It is not hard to prove (see, for instance, Proposition~4.4
of~\cite{lavnek}) that every level transitive action of $F$ on a
rooted tree $T$ for which there exists a path $w\in\partial T$
with trivial stabilizer is conjugate to the action on a coset
tree.
\end{examp}

\begin{examp}
Let $T$ be a rooted tree and $w=(v_0, v_1, \ldots)\in\partial T$
be an infinite simple path starting in the root, where $v_i$ are
the vertices that it goes through. Then the stabilizer of $w$ in
the $\aut T$ is isomorphic to the Cartesian product of the
automorphism groups of the sub-trees $\tilde
T_{v_i}=T_{v_i}\setminus T_{v_{i+1}}$ ``hanging'' from the path
$w$. For any free group $F$ consider a sequence of homomorphisms
$\phi_i:F\arr\aut{\tilde T_{v_i}}$ such that the intersection of
kernels $\bigcap_{i\ge 0}\ker\phi_i$ is trivial. Then we get a
faithful action of $F$ on the tree $T$ for which $w$ is a fixed
point. If the intersection $\bigcap_{i\ge n}\ker\phi_i$ is trivial
for every $n$, then the image of $F$ in the group of germs
$\aut{T}_{(w)}$ is faithful.
\end{examp}

\section{Applications}
\subsection{Contracting groups}

Let $\alb=(X, X, \ldots)$ be a constant sequence.

\begin{defi}
A \emph{self-similar group} is a group $G<\aut{\xs}$ such that for
every $g\in G$ and $v\in\xs$ we have $g|_v\in G$.
\end{defi}

It is sufficient to check that $g|_x\in G$ for every $x\in X$ and
every generator $g$ of $G$, due to the
properties~\eqref{eq:restriction} in Subsection~\ref{ss:words}.

\begin{defi}
A self-similar group $G$ of automorphisms of $\xs$ is called
\emph{contracting} if there exists a finite set $\nuke\subset G$
such that for every $g\in G$ there exists $n$ such that
$g|_v\in\nuke$ for all words $v$ of length more than $n$. The
smallest set $\nuke$ satisfying this condition is called the
\emph{nucleus} of the contracting group.
\end{defi}

It is proved in~\cite{nek:book} Proposition~2.13.8 that if a
finitely generated group $G$ is contracting then the growth of the
components of the Schreier graph of the action of $G$ on $\xo$ is
polynomial.

\begin{proposition}
\label{pr:finitegerms} If $G$ is a contracting group then for
every $w\in\xo$ the group of germs $G_{(w)}$ is finite of
cardinality not greater than the size of the nucleus.
\end{proposition}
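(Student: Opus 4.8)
The plan is to bound the group of germs $G_{(w)}$ by exhibiting it as a quotient of the stabilizer $G_w$ of $w$ in which the image of every element is determined by a bounded amount of data coming from the nucleus. Fix $w = x_1x_2\ldots \in \xo$ and denote by $v_n = x_1x_2\ldots x_n$ its prefix of length $n$. The contracting property gives, for each $g \in G_w$, an integer $n_0$ such that $g|_{v_n} \in \nuke$ for all $n \ge n_0$. The key observation is that the germ of $g$ at $w$ is completely determined by the pair $(g(v_n), g|_{v_n})$ for any single sufficiently large $n$: indeed, if two elements $g, h \in G_w$ satisfy $g(v_n) = h(v_n) = v_n$ (which holds since both fix $w$, once $n$ is large enough that we have passed any initial discrepancy — in fact $g,h$ fix all $v_n$) and $g|_{v_n} = h|_{v_n}$, then $g$ and $h$ act identically on the subtree $v_n\xs$, hence on the neighborhood $v_n\xo$ of $w$, so they have the same germ. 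Thus the germ of $g$ is determined by the stable value $g|_{v_n} \in \nuke$, which I'll call the \emph{stabilized section} of $g$ along $w$.

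Next I would make this into a genuine map. Define $\pi : G_w \arr \nuke$ (as a set, not a priori a homomorphism) by sending $g$ to its stabilized section along $w$, i.e. the common value of $g|_{v_n}$ for all large $n$; this is well-defined by~\eqref{eq:restriction} (the third identity, $g|_{v_1v_2} = g|_{v_1}|_{v_2}$, shows the sequence $g|_{v_n}$ is eventually constant precisely when some term lands in $\nuke$ and stays, which the contracting hypothesis guarantees) and the existence of a nonempty nucleus. By the remark of the previous paragraph, two elements of $G_w$ with the same image under $\pi$ have the same germ at $w$, and conversely elements with the same germ agree on some $v_n\xo$, hence have $g|_{v_m} = h|_{v_m}$ for all $m \ge n$ large, so the same stabilized section. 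Therefore $\pi$ factors through an injection $G_{(w)} \hookrightarrow \nuke$ as sets, giving $|G_{(w)}| \le |\nuke|$ immediately. Since $G_{(w)}$ is a group, the bound $|G_{(w)}| \le |\nuke|$ is all that is asserted, so strictly speaking I need not check that $\pi$ is a homomorphism — though it is essentially immediate from the first identity in~\eqref{eq:restriction} that the stabilized-section map respects multiplication once one restricts to a cofinal set of levels, which one is free to do.

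The main obstacle, and the point that needs the most care, is verifying that the stabilized section is genuinely well-defined and that it really separates germs: one must check that if $g$ fixes $w$ then $g$ fixes every prefix $v_n$ (true, since fixing the infinite path forces fixing each vertex on it), so that $g|_{v_n}$ is again an element of $G$ acting on the \emph{same} tree $\xs$ (here we use that $\alb$ is constant, so $\xs_{|v|} = \xs$), and that the eventual constancy from the contracting condition kicks in uniformly enough to define $\pi(g)$. The only subtlety is that the integer $n_0$ depends on $g$, but that is harmless for a pointwise-defined map. Once these bookkeeping points are settled the cardinality bound drops out with no further work.
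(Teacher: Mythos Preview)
Your key observation is correct and is exactly what the paper uses: if $g,h\in G_w$ satisfy $g|_{v_n}=h|_{v_n}$ for some $n$, then $g$ and $h$ have the same germ at $w$. The gap is in the definition of your map $\pi$. You assert that the sequence $(g|_{v_n})_{n\ge 0}$ is eventually constant once it enters $\nuke$, but the contracting hypothesis only guarantees that the sequence eventually \emph{stays in} $\nuke$, not that it stabilizes. The third identity in~\eqref{eq:restriction} gives $g|_{v_{n+1}}=(g|_{v_n})|_{x_{n+1}}$, which is a walk in the finite set $\nuke$ driven by the letters of $w$; there is no reason for such a walk to become constant. A concrete counterexample is the Grigorchuk group: the generator $b$ fixes $w=111\ldots$, and the sections $b|_{1^n}$ cycle periodically through $b,c,d,b,c,d,\ldots$, all of which lie in the nucleus $\{1,a,b,c,d\}$ but never settle. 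Thus your ``stabilized section'' $\pi(g)$ is not well-defined, and the injection $G_{(w)}\hookrightarrow\nuke$ does not exist as a set map in the way you describe.

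The paper circumvents this by a direct pigeonhole argument that avoids any global map. One takes any subset $A\subset G_w$ with $|A|>|\nuke|$, chooses a single level $n$ large enough that $g|_u\in\nuke$ for \emph{every} $g\in A$ (where $u=v_n$; this is possible because $A$ is finite), and then observes that two elements of $A$ must have the same section at $u$, hence the same germ. This is precisely your separating observation applied at one common level, without ever needing the individual sequences to converge. Your write-up becomes correct if you replace the stabilized-section map by this finite pigeonhole step.
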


\begin{proof}
Let $A\subset G_w$ be a subset such that such that $|A|>|\nuke|$,
where $\nuke$ is the nucleus of the group. There is $n$ such that
$g|_v\in\nuke$ for all $g\in A$ and all $v\in\alb^n$. In
particular, there exist $g, h\in A$ such that $g|_u=h|_u$, where
$u$ is the beginning of the length $n$ of the word $w$. It means
that $g^{-1}h$ acts trivially on the subtree $u\xs$, i.e., that
the images of $g$ and $h$ in $G_{(w)}$ are equal. Consequently,
$G_{(w)}$ has no finite subsets of size more than $|\nuke|$, i.e.,
$|G_{(w)}|\le |\nuke|$.
\end{proof}

\begin{theorem}
\label{th:contracting} Contracting groups have no free subgroups.
\end{theorem}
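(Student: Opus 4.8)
The plan is to apply Theorem~\ref{th:mainlemma} to a contracting group $G$ acting on $\xs$ and rule out cases (2) and (3) directly. Since $G$ acts faithfully on $\xs$ by definition, the theorem gives three alternatives, and we want to conclude that alternative (1) holds, i.e. $G$ has no free non-abelian subgroups. First I would dispose of alternative (3): if $w\in\xo$ were a point whose group of germs $G_{(w)}$ contained a free non-abelian subgroup, this would contradict Proposition~\ref{pr:finitegerms}, which says $G_{(w)}$ is finite (of size at most $|\nuke|$). So alternative (3) is impossible for contracting groups, and this step is essentially immediate.

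The substantive work is ruling out alternative (2): there is no free non-abelian subgroup $F<G$ together with a point $w\in\partial\xs$ having trivial stabilizer $F_w$. Here I would argue via amenability of Schreier graphs. The key tool is Proposition~\ref{pr:griamenable} together with the fact (cited from \cite{nek:book}, Proposition~2.13.8, and also derivable from Proposition~\ref{pr:measureregular}) that elements of a finitely generated contracting group are almost finitary. Concretely, suppose $F<G$ is a free non-abelian subgroup with $F_w$ trivial for some $w$. Then $F$ acts on the orbit $Fw\subset\xo$, and since the stabilizer of $w$ is trivial this orbit is in bijection with $F$ itself, so the Schreier graph of $F$ acting on $Fw$ (with respect to a finite free generating set) is the Cayley graph of $F$, which is non-amenable. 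On the other hand, we may assume $F$ is finitely generated (a free non-abelian group contains a finitely generated free non-abelian subgroup, and replacing $F$ by such a subgroup only shrinks the orbit and keeps the stabilizer trivial), and then each generator of $F$, being an element of the contracting group $G$, is almost finitary on the subtree $T$ spanned by the orbit closure — more precisely, one restricts attention to the $F$-invariant rooted subtree $T=\{v\in\xs : v\xo\cap \overline{Fw}\ne\varnothing\}$ on which $F$ acts level transitively, and applies Proposition~\ref{pr:griamenable} to conclude that all components of the Schreier graph of $F$ on $\partial T$ are amenable. Since $Fw$ is contained in such a component, its Schreier graph is amenable, contradicting non-amenability of the Cayley graph of $F$.

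The main obstacle I anticipate is the bookkeeping in the previous paragraph: Proposition~\ref{pr:griamenable} is stated for a finitely generated group acting level transitively on a $G$-invariant rooted subtree with \emph{all} elements almost finitary, so one must (a) reduce to a finitely generated free subgroup $F$, (b) produce the appropriate $F$-invariant rooted subtree $T$ of $\xs$ on which $F$ acts level transitively — taking $T$ to be the union of all vertices lying on paths in the closure of the orbit $Fw$ — and (c) verify that the restrictions of the generators of $F$ to $T$ are almost finitary, which follows from the contracting property since the growth functions $\theta_{g,T}(n)$ are bounded by the global growth functions $\theta_g(n)$, and the latter grow polynomially while $|L_n|$ grows exponentially (here one uses that a finitely generated contracting group acts on a tree with exponentially growing levels, or passes to the subtree generated by the orbit). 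Once these points are arranged, the contradiction between amenability and non-amenability closes the argument, and together with the elimination of alternative (3) we conclude that alternative (1) holds: $G$ has no free non-abelian subgroups, hence no free subgroups other than abelian ones, which are cyclic and thus not counterexamples to the statement.
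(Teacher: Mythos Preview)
Your elimination of case (3) via Proposition~\ref{pr:finitegerms} is correct and matches the paper. The gap is in your treatment of case (2): you assert that elements of a contracting group are almost finitary, equivalently that $\theta_g(n)$ grows polynomially, and then feed this into Proposition~\ref{pr:griamenable}. That assertion is false. Take $X=\{0,1\}$ and $a=\sigma(a,a)$ with $\sigma=(0\;1)$; then $a$ has order $2$, the group $\langle a\rangle\cong\mathbb{Z}/2\mathbb{Z}$ is contracting with nucleus $\{1,a\}$, yet $a|_v=a$ for every $v\in\xs$, so $\theta_a(n)=2^n$ and $a$ is not almost finitary. Contraction only forces restrictions into a finite set $\nuke$; it says nothing about how many of them are trivial. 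You have also misread Proposition~2.13.8 of \cite{nek:book}: it asserts polynomial growth of the \emph{Schreier graphs} of the action on $\xo$, not of $\theta_g$.

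The repair is to use that Schreier-graph statement directly, which is exactly what the paper does in one line. If $F$ is free non-abelian with $F_w$ trivial, the Schreier graph of $F$ on the orbit $Fw$ is the Cayley graph of $F$ and has exponential growth; polynomial growth of the orbital Schreier graphs of a finitely generated contracting group then gives the contradiction without any appeal to Proposition~\ref{pr:griamenable}. One further point: the polynomial-growth result is stated for \emph{finitely generated contracting} groups, and a finitely generated $F<G$ need not itself be self-similar. The paper handles this by embedding $F$ in the finitely generated contracting group generated by all $s|_v$ for $s$ ranging over a finite generating set of $F$ and $v\in\xs$; your reduction ``assume $F$ finitely generated'' does not by itself supply a finitely generated contracting group containing $F$.
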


\begin{proof}
We have to eliminate the possibilities (2) and (3) of
Theorem~\ref{th:mainlemma}. Note that it is sufficient to prove
this theorem for finitely generated contracting groups, since
every finitely generated subgroup $H=\langle S\rangle$ of a
contracting group is a subgroup of a finitely generated
contracting group. One has to take the group generated by all
elements of the form $s|_v$ for $s\in S$ and $v\in\xs$.

Condition (2) can not be true, since the orbits of the action of a
contracting group have polynomial growth.

Condition (3) is not possible by Proposition~\ref{pr:finitegerms}.
\end{proof}

\begin{examps}
Iterated monodromy groups (see definitions in~\cite{bgn,nek:book})
of expanding self-coverings of orbispaces, in particular, iterated
monodromy groups of post-critically finite rational functions, are
contracting hence have no free subgroups. In some cases (like for
the polynomial $z^2+i$, see~\cite{buxperez:imgi}) the iterated
monodromy groups have sub-exponential growth, which obviously
implies that they have no free subgroups. In some other cases
absence of free groups was proved separately using some
contraction arguments. See for instance a proof of absence of free
subgroups in the iterated monodromy group of $z^2-1$
in~\cite{zukgrigorchuk:3st}.
\end{examps}

For more on contracting groups and their properties see the
monograph~\cite{nek:book}, especially its sections~2.11 and~2.13.

\subsection{Groups generated by bounded automorphisms}

Let $\xs$ be a level-tran\-si\-tive rooted tree defined by a
sequence
\[\alb=(X_1, X_2, \ldots)\]
of finite sets.

We say that an automorphisms $g$ of $\xs$ is \emph{finitary} if
there exists $n$ such that $g|_v$ is trivial for all $v\in\alb^n$.
(Recall that $\alb^n=X_1\times\cdots\times X_n$.) The smallest $n$
with this property is called the \emph{depth} of $g$.

Note that the set of all automorphisms of depth at most $n$ is a
finite subgroup of $\aut{\xs}$. In particular, the set of all
finitary automorphisms of $\xs$ is a locally finite group.

\begin{defi}
\label{def:bdd} An automorphism $g$ of $\xs$ is \emph{bounded} if
there exists a finite set of sequences $W=\{w_1, w_2, \ldots,
w_m\}\subset\xo$ and a number $n$ such that if $v\in\xs$ is not a
beginning of any sequence $w_i$ then $g|_v$ is finitary of depth
at most $n$. The number $n$ is called the \emph{finitary depth} of
$g$ and the set $W$ is called the set of \emph{directions} of $g$.
\end{defi}

Informally, an automorphism $g\in\aut{\xs}$ is bounded if its
activity is concentrated in strips of bounded width around a
finite number of paths in $\xs$.

Note that if $g$ is bounded, then the set $\Sigma_g$ of
$g$-singular points is a subset of the set of directions of $g$
and the sequence $\theta_g(n)$ is bounded.

\begin{proposition}
If $g_1, g_2$ are bounded automorphisms of $\xs$ of finitary depth
$\le n$, then $g_1^{-1}$ and $g_1g_2$ are bounded of finitary
depth $\le n$.
\end{proposition}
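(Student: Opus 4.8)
The plan is to verify the three assertions directly from the definitions. First I would dispose of the inverse: if $g_1$ has directions $W$ and finitary depth $\le n$, then for any vertex $v$ that is not a beginning of some $w_i\in W$ we know $g_1|_v$ is finitary of depth $\le n$. By the second identity in~\eqref{eq:restriction}, $g_1^{-1}|_{g_1(v)} = (g_1|_v)^{-1}$, so the restrictions of $g_1^{-1}$ are exactly the inverses of the restrictions of $g_1$, reindexed by the bijection $v\mapsto g_1(v)$. Since the set of automorphisms of depth $\le n$ is a finite \emph{group} (as noted just above Definition~\ref{def:bdd}), inverses of finitary automorphisms of depth $\le n$ are again finitary of depth $\le n$; and the image $g_1(W)$ of a finite set is a finite set of sequences, which serves as the set of directions of $g_1^{-1}$. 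Hence $g_1^{-1}$ is bounded with finitary depth $\le n$.

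For the product, let $W_1, W_2$ be the direction sets of $g_1, g_2$. I claim $W := W_1\cup g_2^{-1}(W_1)\cup W_2$ works (again a finite set). Take a vertex $v\in\xs$ that is not a beginning of any sequence in $W$. Then $v\notin W_2$ (as beginnings), so $g_2|_v$ is finitary of depth $\le n$; and $v\notin g_2^{-1}(W_1)$, which I would argue means $g_2(v)$ is not a beginning of any $w_i\in W_1$, so $g_1|_{g_2(v)}$ is finitary of depth $\le n$. By the first identity of~\eqref{eq:restriction}, $(g_1g_2)|_v = g_1|_{g_2(v)}\, g_2|_v$, a product of two finitary automorphisms of depth $\le n$, hence (using that depth-$\le n$ automorphisms form a group) finitary of depth $\le n$. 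Therefore $g_1g_2$ is bounded with direction set $W$ and finitary depth $\le n$.

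The step requiring the most care is the bookkeeping around direction sets under the action of $g_2$: I need that ``$v$ is a beginning of $w_i$'' transforms correctly under applying $g_2$, i.e. that $g_2$ maps the beginnings of $w\in\xo$ to the beginnings of $g_2(w)\in\xo$ — this is immediate since a tree automorphism preserves the ancestor relation and acts compatibly on levels, but one should state it cleanly so that $v\notin g_2^{-1}(W_1)$ as a set of vertex-beginnings really does give $g_2(v)\notin W_1$ as vertex-beginnings. The rest is routine application of~\eqref{eq:restriction} together with the observation that finitary automorphisms of a fixed bounded depth form a finite subgroup of $\aut{\xs}$, so this group is closed under inversion and multiplication.
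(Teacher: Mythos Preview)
Your argument is correct and follows the same route the paper sketches: it is exactly the restriction identities~\eqref{eq:restriction} together with the fact that finitary automorphisms of depth $\le n$ form a group, with you additionally spelling out explicit direction sets ($g_1(W)$ for the inverse, $g_2^{-1}(W_1)\cup W_2$ for the product) and the ancestor-preservation bookkeeping that the paper leaves implicit. The inclusion of $W_1$ in your product direction set is harmless but unnecessary, since only $g_2^{-1}(W_1)$ and $W_2$ are used.
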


In particular, the set of bounded automorphisms of $\xs$ is a
group. Note that this group is uncountable.

\begin{proof}
It is a direct corollary of the properties~\eqref{eq:restriction}
from Subsection~\ref{ss:words} and the fact that the set of
finitary automorphisms of depth $\le n$ is a group.
\end{proof}

\begin{theorem}
\label{th:bounded} The group of bounded automorphisms of a
spherically homogeneous tree of bounded degree of vertices does
not contain free non-abelian subgroups.
\end{theorem}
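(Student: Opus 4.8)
The plan is to apply Theorem~\ref{th:mainlemma} to the group $B$ of all bounded automorphisms of $\xs$, which acts faithfully on the locally finite rooted tree $\xs$, and to rule out alternatives~(2) and~(3); the theorem then forces~(1). Both cases begin with the same reduction. A free non-abelian group contains a free subgroup of rank~$2$, so it is enough to treat a subgroup $F$ generated by two elements $a,b$; since $a$ and $b$ are bounded there is $n$ with both of finitary depth $\le n$, and then, by the Proposition on composition of bounded automorphisms, \emph{every} element of $F$ is bounded of finitary depth $\le n$; in particular each growth function $\theta_g$ ($g\in F$) is a bounded sequence, using that the vertices of $\xs$ have bounded degree (cf.\ the remark after Definition~\ref{def:bdd}).

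To rule out~(2), assume $F=\langle a,b\rangle<B$ is free non-abelian and $w\in\xo$ has $F_w=1$, so that $g\mapsto g(w)$ is a bijection $F\to O:=Fw$ and the Schreier graph of the $F$-action on $O$ with respect to $S=\{a^{\pm1},b^{\pm1}\}$ is isomorphic to the Cayley graph of $F$, a $4$-regular tree, which is \emph{not} amenable. I would then pass to the rooted subtree $\widehat T\subset\xs$ consisting of all beginnings of points of $O$: it is $F$-invariant, its $k$-th level is the single $F$-orbit $O_k$ of length-$k$ beginnings of points of $O$, so $F$ acts level transitively on $\widehat T$, and $O\subset\partial\widehat T$. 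Because the truncation maps $O_{k+1}\to O_k$ are onto while $O$ is infinite, $|O_k|\to\infty$, and since $\theta_{g,\widehat T}(k)\le\theta_g(k)$ is a bounded sequence for each $g\in F$, every element of $F$ is almost finitary on $\widehat T$. Proposition~\ref{pr:griamenable} then gives that all components of the Schreier graph of the $F$-action on $\partial\widehat T$ are amenable; but the component of $w$ is precisely $O$ with the Schreier structure above, which is not amenable --- a contradiction.

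To rule out~(3), I must show that for every $w\in\xo$ the germ group $B_{(w)}$ has no free non-abelian subgroup. Such a subgroup would contain a rank-$2$ free one; lifting its two generators to $\tilde a,\tilde b\in B_w$ and putting $H=\langle\tilde a,\tilde b\rangle\le B_w$, the image of $H$ in $B_{(w)}$ is the germ group $H_{(w)}$ and contains a rank-$2$ free group, while every element of $H$ fixes $w$ and is bounded of finitary depth $\le n$ for one fixed $n$. Write $v_0,v_1,\dots$ for the vertices of the path $w$ and $\widetilde T_j=\xs_{v_j}\setminus\xs_{v_{j+1}}$, rooted at $v_j$, for the subtrees hanging off $w$. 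As observed after Theorem~\ref{th:mainlemma}, the stabilizer of $w$ in $\aut{\xs}$ is isomorphic to $\prod_{j\ge0}\aut{\widetilde T_j}$, and an automorphism is trivial on some neighbourhood of $w$ exactly when all but finitely many of its components in this product are trivial; hence $\aut{\xs}_{(w)}\cong(\prod_{j}\aut{\widetilde T_j})/(\bigoplus_{j}\aut{\widetilde T_j})$ and $B_{(w)}$, hence $H_{(w)}$, embeds into it. Now for $h\in H$ and every $j$ large enough that $v_j$ is not a beginning of any direction of $h$ other than $w$, each child $v_jy\ne v_{j+1}$ of $v_j$ is a beginning of no direction of $h$, so $h|_{v_jy}$ is finitary of depth $\le n$; therefore the $j$-th component of $h$ lies in the subgroup $D_j\le\aut{\widetilde T_j}$ of automorphisms that permute the children $v_jy\ne v_{j+1}$ of $v_j$ and act on each subtree $\xs_{v_jy}$ as a finitary automorphism of depth $\le n$. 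Since $\xs$ has bounded vertex degree, $D_j$ is finite with $|D_j|\le C$ for a constant $C$ independent of $j$ and of $h$; consequently the image of $H$ in $\aut{\xs}_{(w)}$ lies in $(\prod_{j}D_j)/(\bigoplus_{j}D_j)$, a group of exponent dividing $C!$. A group of bounded exponent is torsion and so contains no free non-abelian subgroup, contradicting the fact that $H_{(w)}$ contains a rank-$2$ free group. Thus~(3) is impossible as well, and Theorem~\ref{th:mainlemma} gives~(1).

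The hard part is case~(3): one must identify the germ of $h\in H$ at $w$ with the tail-equivalence class of the sequence of its components in $\prod_j\aut{\widetilde T_j}$, check that this identification is compatible with the index shift caused by passing from $v_k$ to $v_{k+1}$, and verify carefully that being ``finitary of depth $\le n$ away from the directions of $h$'' really does confine the relevant components to the \emph{finite} groups $D_j$; this is exactly where boundedness of the automorphisms, the bounded degree of $\xs$, and the uniformity of the depth bound $n$ over $H$ are all used, which is the reason for first passing to a $2$-generated subgroup. Case~(2) is more routine: the only point is that, working with the subtree $\widehat T$ of beginnings of the single orbit $O$, one recovers level transitivity and can invoke Proposition~\ref{pr:griamenable} directly instead of re-running its F\o lner construction.
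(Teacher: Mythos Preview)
Your proof is correct and follows the paper's approach: case~(2) is ruled out via Proposition~\ref{pr:griamenable} on the level-transitive subtree of beginnings of the orbit, and case~(3) by showing that near $w$ the action is confined to a window of bounded width, forcing torsion. The only cosmetic difference is that in case~(3) the paper first passes to a neighbourhood where $w$ is the unique direction of both generators and then shows the restricted generators themselves have finite order, whereas you phrase the same observation as bounded exponent of the germ group via the decomposition $\prod_j\aut{\widetilde T_j}$.
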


\begin{proof}
Since the degree of vertices of the tree $\xs$ is bounded, the
sequence $(|X_i|)_{i\ge 1}$ is bounded.

Suppose that the group generated by bounded automorphisms contains
a finitely generated free non-abelian group. Then by
Theorem~\ref{th:mainlemma} either there exists a free group $F$
generated by bounded automorphisms and having trivial stabilizer of a
sequence $w\in\xo$, or there exists a free group $F$ fixing a
point $w\in\xo$ such that $F$ acts faithfully on every
neighborhood $v\xo_{|v|}$ of $w$.

Suppose that the first case holds. Let $T$ be the sub-tree of
$\xs$ such that $w\in\partial T$ and the free group $F$ acts level
transitively on $T$. (The tree $T$ is the union of the $F$-orbits
of beginnings of $w$.) The boundary $\partial T$ has to be
infinite, otherwise the stabilizer of $w\in\partial T$ is of
finite index. But for every $g\in F$ the set of $g$-singular
points of $\partial T$ is finite, hence of zero measure.
Consequently, by Proposition~\ref{pr:measureregular}
and~\ref{pr:griamenable}, the components of the Schreier graph of
the action of $F$ on $\partial T$ are amenable, which contradicts
with triviality of the stabilizer of $w$.

Suppose now that there exists $w$ such that a free 2-generated
group $F=\langle g, h\rangle$ fixes $w$ and acts faithfully on
neighborhoods of $w$. Let $W_g$ and $W_h$ be the sets of
directions of $g$ and $h$, respectively. Then $w\in W_g\cap W_h$,
since otherwise there would exist a neighborhood $v\xo_{|v|}$ of
$w$ such that $g|_v$ or $h|_v$ is trivial.

There exists a beginning $u$ of $w$ such that all elements of
$W_g$ and $W_h$ except for $w$ have beginning of length $|u|$
different from $u$. Then $g|_u$ and $h|_u$ are bounded
automorphisms of $\xs_{|u|}$ with one direction $w'$, where
$w=uw'$. They generate a free group, since $F$ acts faithfully on
$u\xo_{|u|}$.

Let $w'=x_1x_2\ldots$. Let $m$ be a number greater than the
finitary depths of $g'=g|_u$ and $h'=h|_u$. Then $g'$ and $h'$ may
change in a word $x_1x_2\ldots x_ny_{n+1}y_{n+2}\ldots$, for
$y_{n+1}\ne x_{n+1}$, only the letters $y_{n+1}, y_{n+2}, \ldots,
y_{n+m}$ and can not change $y_{n+1}$ to $x_{n+1}$. Since the size
of the alphabets $X_i$ are uniformly bounded, this implies that
the automorphisms $g'$ and $h'$ have finite order, which is a
contradiction.
\end{proof}

\begin{examps}
It is proved in~\cite{bknv} that groups generated by
\emph{finite-state} (see Definition~\ref{def:finitestate}) bounded
automorphisms of a rooted tree $\xs$ are amenable, which implies
that they have no free subgroups. Some examples of
non-finite-state groups of bounded automorphisms of $\xs$ are
studied in~\cite{grigorchuk:growth_en} and~\cite{nek:ssfamilies}.
\end{examps}

\subsection{Theorem of S.~Sidki}
Let again $\alb=(X, X, \ldots)$ be a constant sequence of finite
alphabets.

\begin{defi}
\label{def:finitestate} An automorphism $g$ of the tree $\xs$ is
said to be \emph{finite-state} if the set
\[\{g|_v\;:\;v\in\xs\}\subset\aut{\xs}\]
is finite.
\end{defi}

The set of all finite state automorphisms of the tree $\xs$ is a
countable group. It is proved in~\cite{sid:cycl} that if $g$ is
finite state then the growth function $\theta_g(n)$ (as given in
Definition~\ref{def:growthfunction}) has either exponential or
polynomial growth. The set $P_d(\alb)$ of all finite state
automorphisms of $\xs$ for which $\theta_g(n)$ is bounded by a
polynomial of degree $d$ is a group. A version of the growth function
$\theta_g(n)$ was defined and studied by S.~Sidki
in~\cite{sid:cycl}. Later in~\cite{sidki:polynonfree} S.~Sidki
proved that the groups $P_d(\alb)$ have no free subgroups
(actually, his theorem is more general, since it also covers some
cases of infinite alphabet $X$). Let us show how his theorem (for
the case of finite alphabet) follows from
Theorem~\ref{th:mainlemma}.

The following inductive description of elements of $P_d(\alb)$ is
given in~\cite{sid:cycl}. We denote by $P_{-1}(\alb)$ the group of
finitary automorphisms of $\xs$, i.e., the automorphisms $g$ for
which the sequence $\theta_g(n)$ is eventually zero.

\begin{proposition}
\label{pr:polydescription} Let $d$ be a non-negative integer. A
finite-state automorphism $g$ of $\xs$ belongs to $P_d(\alb)$ if
and only if there exists a finite number of almost periodic words
$W=\{v_iu_i^\omega\}\subset\xo$ such that the restrictions
$g_{i, n}=g|_{v_iu_i^n}$ do not depend on $n$, and
$g|_v\in P_{d-1}(\alb)$ if $v$ is not a beginning of
any $v_iu_i^\omega$.
\end{proposition}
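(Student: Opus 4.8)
The plan is to prove the two implications separately, since this is a characterization of membership in $P_d(\alb)$.

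\medskip
\noindent\textbf{Plan of proof.}
First I would establish the ``if'' direction: suppose $g$ is finite-state and there is a finite set of almost periodic words $W=\{v_iu_i^\omega\}$ such that $g|_{v_iu_i^n}$ is independent of $n$ for $n$ large, and $g|_v\in P_{d-1}(\alb)$ whenever $v$ is not a beginning of any $v_iu_i^\omega$. The goal is to show $\theta_g(n)$ is bounded by a polynomial of degree $d$. I would count, for a vertex $v\in\alb^n$ with $g|_v\ne id$, how far $v$ can be from the finite set $W$: either $v$ is a beginning of some $v_iu_i^\omega$ (there are at most $|W|$ such vertices on each level), or $v = v'x$ where $v'$ is the last ancestor of $v$ lying on one of the rays in $W$. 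In the latter case $g|_{v'}\in P_{d-1}(\alb)$ by hypothesis (or is obtained from one of the finitely many ``periodic'' restrictions $g_{i,n}$ by one further restriction, so lies in the group generated by finitely many elements of $P_{d-1}(\alb)$), and $g|_v = g|_{v'}|_{x}$ is a restriction of such an element. By the inductive hypothesis on $d$ (the base case $d=-1$ being the definition of finitary), $\theta_{g|_{v'}}(m)\le C m^{d-1}$ for some uniform constant $C$ over the finitely many relevant elements $g|_{v'}$. Summing $\theta_{g|_{v'}}(n-|v'|)$ over the $O(n)$ branch vertices $v'$ along the finitely many rays gives $\theta_g(n) = O(n\cdot n^{d-1}) = O(n^d)$, as desired; one also must check $g\in P_d(\alb)$ is finite-state, which is given. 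I would also note that one should verify $P_d(\alb)$ is a group so that ``$g$ lies in the group generated by finitely many elements of $P_{d-1}(\alb)$'' makes sense at the next level of the induction, but this is cited from~\cite{sid:cycl}.

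Next, the ``only if'' direction: suppose $g$ is finite-state with $\theta_g(n)\le C n^d$. I would consider the finite set $Q=\{g|_v : v\in\xs\}$ of states. For each ray $w\in\xo$, the sequence of restrictions $g|_{v}$ along the beginnings $v$ of $w$ takes values in the finite set $Q$, so it is eventually periodic along every \emph{eventually periodic} ray; more to the point, I would identify the ``heavy'' directions of $g$ as those rays $w$ such that $\liminf \theta_{g|_v}(m)/(\text{something})$ stays of degree exactly $d$. The key observation is that a vertex $v$ with $g|_v\notin P_{d-1}(\alb)$ forces $\theta_{g|_v}(m)$ to grow faster than any polynomial of degree $d-1$; but since states lie in the finite set $Q$, the set $Q\setminus P_{d-1}(\alb)$ is finite, and the subtree of vertices $v$ with $g|_v\in Q\setminus P_{d-1}(\alb)$ must be ``thin'' — if it branched too much, $\theta_g(n)$ would exceed $Cn^d$. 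Quantitatively: if two incomparable vertices $v_1, v_2$ of length $n$ both have $g|_{v_1}, g|_{v_2}\notin P_{d-1}(\alb)$, the contributions $\theta_{g|_{v_1}}$ and $\theta_{g|_{v_2}}$ to $\theta_g$ add; iterating, the number of such vertices on level $n$ is $O(1)$ (otherwise one gets an extra factor of $n$ pushing the degree past $d$). Hence these vertices lie within bounded distance of finitely many rays $w_1,\dots,w_k$; since $Q$ is finite, along each $w_i$ the restriction sequence is eventually periodic, giving the almost periodic presentations $v_iu_i^\omega$ with $g|_{v_iu_i^n}$ eventually constant, and off these rays $g|_v\in P_{d-1}(\alb)$.

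I expect the main obstacle to be the quantitative counting argument in the ``only if'' direction — precisely, turning the intuitive statement ``if the subtree of states outside $P_{d-1}(\alb)$ branches more than a bounded amount, then $\theta_g(n)$ grows like $n^{d+1}$'' into a clean inequality. The subtlety is that an element of $P_{d-1}(\alb)$ can still have a state (deeper down) that is not in $P_{d-1}(\alb)$ only if it is itself not finite-state in the relevant sense — actually finite-stateness of $g$ forces the ``bad'' states to recur, so one must use the pigeonhole/recurrence on $Q$ carefully to locate the finitely many periodic rays and to conclude that the restriction is genuinely eventually constant (not merely eventually periodic with period dividing $|u_i|$, which is what one gets for free — one then replaces $u_i$ by a power). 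I would handle the base case $d=0$ separately (finitary-restriction off finitely many eventually constant rays), and for general $d$ reduce to applying the inductive hypothesis to each of the finitely many states $g|_{v'}\in P_{d-1}(\alb)$ appearing at the branch points.
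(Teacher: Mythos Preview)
The paper does not actually prove this proposition: it is quoted without proof from Sidki~\cite{sid:cycl} (see the sentence introducing Proposition~\ref{pr:polydescription}). So there is no ``paper's own proof'' to compare against; what you have written is a sketch of a proof that the paper simply imports.

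That said, your plan is essentially the right one and matches the standard argument. A couple of remarks that would tighten it. First, your hesitation in the ``only if'' direction about whether a restriction of an element of $P_{d-1}(\alb)$ might fail to lie in $P_{d-1}(\alb)$ is unnecessary: for any $v$ one has $\theta_{g|_v}(m)\le\theta_g(m+|v|)$, so $P_{d-1}(\alb)$ is closed under taking sections. Consequently the set of vertices $v$ with $g|_v\notin P_{d-1}(\alb)$ is a rooted subtree of $\xs$. Second, the quantitative step you flag as the main obstacle is clean once you use that every state of $g$ lies in $P_d(\alb)$: each ``bad'' state $h\in Q\setminus P_{d-1}(\alb)$ then satisfies $\theta_h(m)\ge c\,m^d$ for some $c>0$ and all large $m$ (the polynomial/exponential dichotomy of~\cite{sid:cycl} rules out intermediate behavior), and since bad vertices on a fixed level have disjoint subtrees, $K$ of them would force $\theta_g(n+m)\ge Kc\,m^d$, bounding $K$ uniformly by comparison with $\theta_g(n+m)\le C(n+m)^d$ as $m\to\infty$. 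This gives finitely many ends for the bad subtree, and finiteness of $Q$ makes each end eventually periodic; replacing $u_i$ by a suitable power and absorbing a preperiod into $v_i$ gives the exact form in the statement.
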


\begin{corollary}
\label{cor:polyamenablegraphs} Let $G<P_d(\alb)$ be finitely
generated. Then every component of the Schreier graph of the
action of $G$ on $\xo$ is amenable.
\end{corollary}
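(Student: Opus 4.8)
The plan is to reduce the statement to Propositions~\ref{pr:measureregular} and~\ref{pr:griamenable}, the only real work being to check that every element of $G$ is almost finitary on a suitable subtree. The connected components of the Schreier graph of the action of $G$ on $\xo$ are the $G$-orbits, and a finite graph is amenable, so it suffices to fix an infinite orbit $O$ of $G$ on $\xo$ and prove that its Schreier graph is amenable. Fix $w=x_1x_2\ldots\in O$, set $v_n=x_1\ldots x_n$, and let $T$ be the subtree of $\xs$ spanned by $O$, i.e.\ the union of the $G$-orbits of the vertices $v_n$. Then $T$ is a $G$-invariant rooted subtree whose $n$th level $L_n$ is the single orbit $G\cdot v_n$; hence $G$ acts level transitively on $T$, the tree $T$ is spherically homogeneous (an element of $G$ carrying one vertex of $L_n$ to another carries the corresponding hanging subtrees to each other), and $O\subseteq\partial T$. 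Since $O$ is infinite we have $|L_n|\to\infty$ (if $|L_n|\le M$ for all $n$, then $M+1$ distinct points of $O$ would have pairwise distinct beginnings of some common length, which is absurd), so the measure $m_T$ on $\partial T$ is non-atomic.

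The heart of the argument is to show that every $g\in G$ is almost finitary on $T$. By Proposition~\ref{pr:measureregular} the limit $\lim_{n\to\infty}\theta_{g,T}(n)/|L_n|$ equals the $m_T$-measure of the set of $g$-singular points of $\partial T$, and this set is contained in the set $\Sigma_g\subseteq\xo$ of all $g$-singular sequences. Since $m_T$ has no atoms, it therefore suffices to prove that $\Sigma_g$ is countable for every $g\in P_d(\alb)$. We argue by induction on $d$, using the inductive description of $P_d(\alb)$ from Proposition~\ref{pr:polydescription}. If $d=-1$ the automorphism $g$ is finitary and $\Sigma_g=\varnothing$. Let $d\ge 0$ and let $W=\{v_iu_i^\omega\}$ be the associated finite set of almost periodic words. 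Since $W$ is finite, any sequence $w'\in\xo$ not lying in $W$ has a beginning that is not a beginning of any $v_iu_i^\omega$; let $v$ be the shortest such beginning. Then $v$ is a \emph{branch vertex}: its parent is a beginning of some $v_iu_i^\omega$ but $v$ itself is not. There are only countably many branch vertices, and $g|_v\in P_{d-1}(\alb)$ for each of them by Proposition~\ref{pr:polydescription}. Writing $w'=vw''$, if $w'\in\Sigma_g$ then every beginning $vp$ of $w'$ satisfies $g|_{vp}=(g|_v)|_p\ne id$, i.e.\ $w''\in\Sigma_{g|_v}$. Hence
\[
\Sigma_g\ \subseteq\ W\ \cup\ \bigcup_{v\ \text{a branch vertex}}v\cdot\Sigma_{g|_v},
\]
which is countable by the induction hypothesis.

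Granting this, every $g\in G\subseteq P_d(\alb)$ is almost finitary on the $G$-invariant level-transitive subtree $T$, so Proposition~\ref{pr:griamenable} shows that every component of the Schreier graph of the action of $G$ on $\partial T$ is amenable; in particular the component $O$ is amenable, which completes the proof. The one step that needs care is the inductive bound on $\Sigma_g$: a sequence that avoids all the rays $v_iu_i^\omega$ must branch off one of them at a well-defined vertex, only countably many such branch vertices occur, and the restriction of $g$ to each of them lies in $P_{d-1}(\alb)$, which is exactly what is needed to invoke the induction hypothesis. The rest is a routine application of the compactness and F\o lner-set machinery already set up in Propositions~\ref{pr:measureregular} and~\ref{pr:griamenable}.
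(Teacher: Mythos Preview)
Your proof is correct and follows essentially the same approach as the paper: both arguments show by induction on $d$ (using Proposition~\ref{pr:polydescription}) that the set $\Sigma_g$ of $g$-singular points is at most countable for every $g\in P_d(\alb)$, then apply Propositions~\ref{pr:measureregular} and~\ref{pr:griamenable} on a $G$-invariant level-transitive subtree $T$ to conclude amenability. Your version is in fact slightly more explicit than the paper's---you construct $T$ concretely from a given infinite orbit $O$ and argue directly that $m_T$ is non-atomic, whereas the paper simply asserts that $(\partial T, m_T)$ is either finite or isomorphic to the standard Lebesgue space---but the substance is the same.
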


It seems that the Schreier graphs of subgroups of $P_d(\alb)$
acting on $\xo$ have sub-exponential growth. That would of course
imply their amenability. See, for instance the graphs considered
in~\cite{CFS,benjaminihoffman}, which are Schreier graphs of a
sub-group of $P_1(\{0, 1\})$ (see Example~\ref{ex:tullio}).

\begin{proof}
Let us show that for every $g\in P_d(\alb)$ the set of
$g$-singular points of $\xo$ is at most countable. We argue by
induction on $d$. If $g\in P_{-1}(\alb)$, i.e., if $g$ is
finitary, then the set of $g$-singular points is empty. Suppose
that for every $g\in P_{d-1}(\alb)$ the set of $g$-singular points
is at most countable. Take arbitrary $g\in P_d(\alb)$. Then by
Proposition~\ref{pr:polydescription}, there is a finite set
$W\subset\xo$ such that $g|_v\in P_{d-1}(\alb)$ for every
$v\in\xs$ which is not a beginning of an element of $W$. Then the
set $\Sigma_g$ of $g$-singular points of $\xo$ is a subset of
\[W\cup\bigcup_{\text{$v\in\xs$ is not a beginning of any $w\in W$}}v\Sigma_{g|_v}.\]
The sets $\Sigma_{g|_v}$ are at most countable by the inductive
hypothesis. Hence $\Sigma_g$ is at most countable.

Let $G$ be a group generated by a finite set $S\subset P_d(\alb)$.
Let $T$ be a $G$-invariant rooted subtree of $\xs$ on which $G$
acts level transitively. Then the measure space $(\partial T,
m_T)$ is either a finite set, or it is isomorphic to the standard
Lebesgue space. In the first case the Schreier graph of the action
of $G$ on $\partial T$ is finite. In the second case the
components of the Schreier graph of the action of $G$ on $\partial
T$ are amenable by Propositions~\ref{pr:measureregular}
and~\ref{pr:griamenable}, since the sets of $g$-singular points of
$\partial T$ for $g\in S$ are at most countable, hence have zero
measure.
\end{proof}

\begin{theorem}[S.~Sidki]
\label{th:sidki} The group $P_d(\alb)$ has no free subgroups.
\end{theorem}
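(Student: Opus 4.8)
The plan is to deduce Theorem~\ref{th:sidki} from Theorem~\ref{th:mainlemma} by eliminating its two nontrivial alternatives, exactly in the spirit of the proof of Theorem~\ref{th:contracting}. First I would reduce to the finitely generated case: any free non-abelian subgroup of $P_d(\alb)$ lies in a finitely generated subgroup $G=\langle S\rangle$ with $S\subset P_d(\alb)$ finite, so it suffices to show such a $G$ has no free non-abelian subgroup. Apply Theorem~\ref{th:mainlemma} to the faithful action of $G$ on the tree $\xs$. If alternative (1) holds we are done, so assume one of (2), (3) holds and derive a contradiction.

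To rule out alternative (2), suppose there is a free non-abelian $F<G$ and a point $w\in\xo$ with trivial stabilizer $F_w$. Let $T\subset\xs$ be the $G$-invariant (indeed $F$-invariant) rooted subtree on which $F$ acts level transitively, namely the union of the $F$-orbits of the beginnings of $w$. Since $F_w$ is trivial, $F$ embeds into $\aut T$ via its action on $\partial T$, so $\partial T$ is infinite (a free non-abelian group cannot act faithfully on a finite set). Then $(\partial T, m_T)$ is a standard Lebesgue space, and by the first part of the proof of Corollary~\ref{cor:polyamenablegraphs} each generator $g\in S$ has at most countably many $g$-singular points in $\partial T$, hence $m_T(\Sigma_g)=0$. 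By Proposition~\ref{pr:measureregular} this means every $g\in S$ is almost finitary on $T$, so by Proposition~\ref{pr:griamenable} all components of the Schreier graph of the action of $F$ on $\partial T$ are amenable. But the orbit of $w$ under $F$, with the trivial stabilizer $F_w$, has Schreier graph isomorphic to the Cayley graph of the free non-abelian group $F$, which is non-amenable — contradiction.

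To rule out alternative (3), suppose there is $w\in\xo$ such that the germ group $G_{(w)}$ contains a free non-abelian subgroup; equivalently there is a free non-abelian $F<G_w$ acting faithfully on every neighborhood $v\xo_{|v|}$ of $w$. Here I would essentially replay the argument in the last two paragraphs of the proof of Theorem~\ref{th:bounded}, but using the inductive structure of $P_d(\alb)$ from Proposition~\ref{pr:polydescription} in place of the finitary-depth bound. Choose generators $g,h$ of a free non-abelian subgroup of $F$; their sets $W_g, W_h$ of almost periodic directions must each contain $w$ (otherwise some restriction $g|_v$ or $h|_v$ would lie in $P_{d-1}(\alb)$ on a neighborhood of $w$, and one iterates down to the finitary level $P_{-1}(\alb)$ where the action on a small enough neighborhood is trivial — contradicting faithfulness of the germ action; this descent is really the crux). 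Passing to a common beginning $u$ of $w$ separating all other directions, $g'=g|_u$ and $h'=h|_u$ generate a free non-abelian subgroup of $P_d(\alb_{|u|})$ acting faithfully on $u\xo_{|u|}$, with a single common direction $w'$ where $w=uw'$. Writing $w'=x_1x_2\ldots$, for $n$ large the restriction $g'|_{x_1\ldots x_n}$ is the fixed restriction $g_{i,n}$ along the almost periodic direction; off that direction the restrictions lie in $P_{d-1}(\alb)$, and inducting on $d$ one shows the action of $g',h'$ on $u\xo_{|u|}$ is determined by finitely much data along the eventually periodic word $w'$, so $g'$ and $h'$ have finite order — contradicting freeness.

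The main obstacle I expect is making the downward induction in the alternative-(3) step fully rigorous: one must show that if $F<G_{(w)}$ is free non-abelian then the directions of the generators all pass through $w$ \emph{and} that, after cutting off at a suitable beginning $u$, the resulting automorphisms $g', h'$ of $\xs_{|u|}$ are forced to be of finite order. This requires carefully tracking, level by level along $w'$, which restrictions survive in $P_{d-1}(\alb)$ versus which are the ``fixed'' restrictions along the periodic tail, and checking — using boundedness of $|X|$ — that only finitely many distinct automorphisms can occur as powers of $g'$ or $h'$, forcing torsion. Everything else is a routine transcription of the corresponding arguments for bounded automorphisms (Theorem~\ref{th:bounded}) and contracting groups (Theorem~\ref{th:contracting}).
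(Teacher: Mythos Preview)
Your elimination of alternative~(2) is essentially the paper's argument via Corollary~\ref{cor:polyamenablegraphs}, and that part is fine.

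The genuine gap is in your treatment of alternative~(3). You propose to ``replay the argument in the last two paragraphs of the proof of Theorem~\ref{th:bounded}'' and conclude that, after restricting to a neighborhood, the generators $g'$ and $h'$ have \emph{finite order}. That step works for bounded automorphisms only because the restrictions off the single direction are \emph{finitary of bounded depth}: on a word $x_1\ldots x_n y_{n+1}y_{n+2}\ldots$ with $y_{n+1}\ne x_{n+1}$ only the block $y_{n+1},\ldots,y_{n+m}$ can move, so orbits are uniformly bounded. For $d\ge 1$ this is simply false: the restrictions off the direction lie in $P_{d-1}(\alb)$, which contains elements of infinite order (e.g.\ the adding machine $a$ in Example~\ref{ex:tullio}), and the element $b$ there is in $P_1(\alb)$, has the single direction $0^\infty$, yet has infinite order. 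So ``finitely much data along the eventually periodic word $w'$'' does not force torsion, and your contradiction evaporates.

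What the paper actually does for~(3) is an induction on $d$ that uses Lemma~\ref{lemma2} rather than a torsion argument. One first observes (via Proposition~\ref{pr:polydescription}) that $w$ must be eventually periodic, $w=v u^\infty$, with $a|_{vu}=a|_v$ and $b|_{vu}=b|_v$; set $a_1=a|_v$, $b_1=b|_v$. By periodicity the action of $\langle a_1,b_1\rangle$ on $\xs\setminus u\xs$ already generates a free group $F$. Letting $U$ be the finite set of vertices adjacent to the path from the root to $u$ and $\tilde F\le F$ the finite-index subgroup stabilizing $U$ pointwise, one gets a \emph{monomorphism}
\[
\phi:\tilde F\arr \prod_{r\in U}\aut{\xs},\qquad g\mapsto (g|_r)_{r\in U},
\]
whose image lies in $(P_{d-1}(\alb))^U$. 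The inductive hypothesis (``$P_{d-1}(\alb)$ has no free subgroups'') forces each coordinate of $\phi$ to have nontrivial kernel, and then Lemma~\ref{lemma2} makes $\phi$ itself non-injective---the contradiction. This is the missing idea: you must feed the induction through Lemma~\ref{lemma2}, not through a finite-order claim.
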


\begin{proof}
Note that $P_{-1}(\alb)$ is locally finite, hence has no free
subgroups. The case of $P_0(\alb)$ is covered by
Theorem~\ref{th:bounded}.

Condition (2) of Theorem~\ref{th:mainlemma} cannot hold for
$P_d(\alb)$ due to Corollary~\ref{cor:polyamenablegraphs}.

Let us investigate now the groups of germs in $P_d(\alb)$. Assume
that we have proved that $P_{d-1}(\alb)$ has no free subgroups.
Suppose that elements $a$ and $b$ of $P_d(\alb)$ generate a free
subgroup of the group of $P_d(\alb)$-germs of $w\in\xo$. It
follows from Proposition~\ref{pr:polydescription} that $w$ can be
represented in the form $w=v(u^\infty)$ for some finite words $v$
and $u$ so that $a|_{vu}=a|_v$, $b|_{vu}=b|_v$ and $a|_v$ and
$b|_v$ do not move $u$. Moreover, $a|_{vu^nu'}, b|_{vu^nu'}\in
P_{d-1}(\alb)$ for $u'\in\xs$ which are not beginnings of the word
$u^\infty$. Since $a$ and $b$ generate a free group of germs,
their restrictions $a_1=a|_v$ and $b_1=b|_v$ onto $v\xs$ generate
a free group.

Since $a_1|_u=a_1$ and $b_1|_u=b_1$, restrictions of the action of
$a_1$ and $b_1$ onto $\xs\setminus u\xs$ generate a free group $F$
(everything is periodic along the path $uuu\ldots$). Let
$u=x_1x_2\ldots x_m$ and let $U\subset\xs$ be the set of words of
the form $x_1x_2\ldots x_ky_{k+1}$, where $y_{k+1}\ne x_{k+1}$ and
$0\le k\le m-1$, i.e., the set of vertices adjacent to the path
from the root to $u$. Denote by $\tilde F$ the intersection of the
stabilizers in $F$ of the elements of $U$. See Figure~\ref{fig:u},
where the elements of $U$ are circled.

\begin{figure}
  \includegraphics{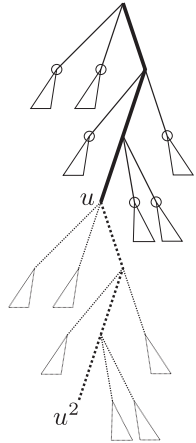}\\
  \caption{The set $U$ and the path $u^\infty$.}\label{fig:u}
\end{figure}

Then $\tilde F$ is a subgroup of finite index in $F$ and we get a
monomorphism
\[\phi:\tilde F\arr (\aut{\xs})^U\] mapping $g$ to $(g|_r)_{r\in
U}$. The homomorphism $\phi$ takes values in $(P_{d-1}(\alb))^U$,
hence it has a non-trivial kernel on each coordinate. Then, by
Lemma~\ref{lemma2}, the homomorphism $\phi$ has a non-trivial
kernel, which is a contradiction.
\end{proof}

\begin{examp}\label{ex:tullio}
Consider the permutations $a$ and $b$ of the set of integers given
by
\[a(n)=n+1\]
and
\[b(0)=0,\qquad b(2^k(2n+1))=2^k(2n+3)\]
for $k\ge 0$ and $n\in\mathbb{Z}$. Let $G$ be the group generated
by $a$ and $b$.

It is easy to see (using the binary numeration system on
$\mathbb{Z}$) that $G$ is isomorphic to the group of automorphisms
of the binary rooted tree $\{0, 1\}^*$ given by the recurrent
rules
\[a(0v)=1v,\qquad a(1v)=0a(v)\]
and
\[b(0v)=0b(v),\qquad b(1v)=1a(v).\]
A direct check (for instance using
Proposition~\ref{pr:polydescription}) shows that $a\in P_0(\{0,
1\})$ and $b\in P_1(\{0, 1\})$, hence the group $G$ has no free
subgroups by theorem of S.~Sidki.

The Schreier graphs of the action of $G$ on $\xo$ coincide with
the graphs considered by T.~Ceccherini-Silberstein, F.~Fiorenzi,
F.~Scarabotti~\cite{CFS} and I.~Benjamini,
C.~Hoffman~\cite{benjaminihoffman} and have intermediate growth.
\end{examp}

\providecommand{\bysame}{\leavevmode\hbox
to3em{\hrulefill}\thinspace}


\begin{thebibliography}{BKNV06}

\bibitem[Ab{\'e}07]{abert:conjugatechains}
Mikl{\'o}s Ab{\'e}rt, \emph{On conjugate chains of subgroups in
residually
  finite groups}, (preprint), 2007.

\bibitem[Ale83]{al:free_en}
S.~V. Aleshin, \emph{A free group of finite automata}, Moscow
University
  Mathematics Bulletin \textbf{38} (1983), 10--13.

\bibitem[AV05]{abertvirag:dimension}
Mikl{\'o}s Ab{\'e}rt and B{\'a}lint Vir{\'a}g, \emph{Dimension and
randomness
  in groups acting on rooted trees}, J. Amer. Math. Soc. \textbf{18} (2005),
  no.~1, 157--192 (electronic).

\bibitem[BGN03]{bgn}
Laurent Bartholdi, Rostislav Grigorchuk, and Volodymyr
Nekrashevych, \emph{From
  fractal groups to fractal sets}, {Fractals in Graz 2001. Analysis -- Dynamics
  -- Geometry -- Stochastics} (Peter Grabner and Wolfgang Woess, eds.),
  {Birkh\"auser Verlag, Basel, Boston, Berlin}, 2003, pp.~25--118.

\bibitem[BG{\v{S}}03]{handbook:branch}
Laurent Bartholdi, Rostislav Grigorchuk, and Zoran
{\v{S}}uni{\'k},
  \emph{Branch groups}, {Handbook of Algebra, Vol. 3}, North-Holland,
  Amsterdam, 2003, pp.~989--1112.

\bibitem[BH05]{benjaminihoffman}
Itai Benjamini and Christopher Hoffman, \emph{{$\omega$}-periodic
graphs},
  Electron. J. Combin. \textbf{12} (2005), Research Paper 46, 12 pp.
  (electronic).

\bibitem[Bha95]{bhattach:freegr}
Meenaxi Bhattacharjee, \emph{The ubiquity of free subgroups in
certain inverse
  limits of groups}, J. Algebra \textbf{172} (1995), 134--146.

\bibitem[BKNV06]{bknv}
Laurent Bartholdi, Vadim Kaimanovich, Volodymyr Nekrashevych, and
Balint Virag,
  \emph{Amenability of automata groups}, (preprint), 2006.

\bibitem[BN03]{bondnek}
Evgen Bondarenko and Volodymyr Nekrashevych, \emph{Post-critically
finite
  self-similar groups}, Algebra and Discrete Mathematics \textbf{2} (2003),
  no.~4, 21--32.

\bibitem[BORT96]{bass}
Hyman Bass, Maria~Victoria {Otero-Espinar}, Daniel Rockmore, and
Charles
  Tresser, \emph{Cyclic renormalization and automorphism groups of rooted
  trees}, Lecture Notes in Mathematics, vol. 1621, Springer-Verlag, Berlin,
  1996.

\bibitem[BP06]{buxperez:imgi}
Kai-Uwe Bux and Rodrigo P{\'e}rez, \emph{On the growth of iterated
monodromy
  groups}, Topological and asymptotic aspects of group theory, Contemp. Math.,
  vol. 394, Amer. Math. Soc., Providence, RI, 2006, pp.~61--76.

\bibitem[BS98]{sibr:gln}
Andrew~M. Brunner and Said~N. Sidki, \emph{The generation of
{$GL(n, Z)$} by
  finite state automata}, Internat.~J. Algebra Comput. \textbf{8} (1998),
  no.~1, 127--139.

\bibitem[CSFS04]{CFS}
T.~Ceccherini-Silberstein, F.~Fiorenzi, and F.~Scarabotti,
\emph{Garden of eden
  theorems for cellular automata and for symbolic dynamical system},
  Proceedings of the workshop ``Random walks and geometry'' held at Erwin
  Schroedinger Institute for Mathematical Physics, Vienna, Austria,
  18.06--13.07 2001 (Vadim~A. Kaimanovich, Klaus Schmidt, and Wolfgang Woess,
  eds.), De Gruyter, Berlin, 2004, pp.~73--108.

\bibitem[Ers04]{ershler:growth}
Anna Erschler, \emph{Boundary behaviour for groups of
subexponential growth},
  Annals of Mathematics \textbf{160} (2004), 1183--1210.

\bibitem[GM05]{glasnermozes}
Yair Glasner and Shahar Mozes, \emph{Automata and square
complexes}, Geom.
  Dedicata \textbf{111} (2005), 43--64.

\bibitem[GN05]{nek:griamenable}
Rostislav Grigorchuk and Volodymyr Nekrashevych, \emph{Amenable
actions of nonamenable
  groups}, Zapiski Nauchnyh Seminarov POMI \textbf{326} (2005), 85--95.

\bibitem[GNS00]{grineksu_en}
Rostislav Grigorchuk, Volodymyr Nekrashevich, and Vitali{\u\i}
  Sushchanskii, \emph{Automata, dynamical systems and groups}, Proceedings of
  the Steklov Institute of Mathematics \textbf{231} (2000), 128--203.

\bibitem[Gri80]{grigorchuk:80_en}
Rostislav Grigorchuk, \emph{On {Burnside's} problem on periodic
groups},
  Functional Anal.\ Appl. \textbf{14} (1980), no.~1, 41--43.

\bibitem[Gri85]{grigorchuk:growth_en}
\bysame, \emph{Degrees of growth of finitely generated groups and
the theory of
  invariant means}, Math.\ USSR Izv. \textbf{25} (1985), no.~2, 259--300.

\bibitem[GS83]{gupta-sidkigroup}
Narain~D. Gupta and Said~N. Sidki, \emph{On the {Burnside} problem
for periodic
  groups}, Math.~Z. \textbf{182} (1983), 385--388.

\bibitem[G{\.Z}02]{zukgrigorchuk:3st}
Rostislav Grigorchuk and Andrzej {\.Z}uk, \emph{On a torsion-free
weakly
  branch group defined by a three state automaton}, Internat. J. Algebra
  Comput. \textbf{12} (2002), no.~1, 223--246.

\bibitem[Kai01]{kaim:leaves}
Vadim~A. Kaimanovich, \emph{Equivalence relations with amenable
leaves need not
  be amenable}, {Topology, Ergodic Theory, Real Algebraic Geometry. Rokhlin's
  Memorial}, Amer. Math. Soc. Transl. (2), vol. 202, 2001, pp.~151--166.

\bibitem[LN02]{lavnek}
Yaroslav Lavreniuk and Volodymyr Nekrashevych, \emph{Rigidity of
branch
  groups acting on rooted trees}, Geom. Dedicata \textbf{89} (2002), no.~1,
  155--175.

\bibitem[Nek05]{nek:book}
Volodymyr Nekrashevych, \emph{Self-similar groups}, Mathematical
Surveys and
  Monographs, vol. 117, Amer. Math. Soc., Providence, RI, 2005.

\bibitem[Nek07]{nek:ssfamilies}
\bysame, \emph{A minimal {Cantor} set in the space of 3-generated
groups},
  Geometriae Dedicata \textbf{124} (2007), no.~2, 153--90.

\bibitem[Oli99]{olij:ccc_en}
Andrij~S. Oli{\u\i}nyk, \emph{Free products of {$C_2$} as groups
of finitely
  automatic permutations}, Voprosy Algebry (Gomel) \textbf{14} (1999),
  158--165.

\bibitem[Sid98]{sidki_monogr}
Said~N. Sidki, \emph{Regular trees and their automorphisms},
Monografias de
  Matematica, vol.~56, IMPA, Rio de Janeiro, 1998.

\bibitem[Sid00]{sid:cycl}
\bysame, \emph{Automorphisms of one-rooted trees: growth, circuit
structure and
  acyclicity}, J. of Mathematical Sciences (New York) \textbf{100} (2000),
  no.~1, 1925--1943.

\bibitem[Sid04]{sidki:polynonfree}
\bysame, \emph{Finite automata of polynomial growth do not
generate a
  free group}, Geom. Dedicata \textbf{108} (2004), 193--204.

\bibitem[VV07]{vorobets:alfree}
Mariya Vorobets and Yaroslav Vorobets, \emph{On a free group of
transformations
  defined by an automaton}, Geometriae Dedicata \textbf{124} (2007), no.~1,
  237--249.

\end{thebibliography}
\end{document}